\theoremstyle{plain}
    \newtheorem{thm}{Theorem}[section]
    \newtheorem{prop}[thm]{Proposition}
    \newtheorem{lemma}[thm]{Lemma}
    \newtheorem{cor}[thm]{Corollary}
\theoremstyle{definition}
    \newtheorem{rem}[thm]{Remark}
\theoremstyle{remark}
    \newtheorem{example}[thm]{Example}
\numberwithin{equation}{section}
\newcommand{\rar}{\ensuremath{\rightarrow}}
\newcommand{\lrar}{\ensuremath{\longrightarrow}}
\newcommand{\G}{\mathcal G}
\newcommand{\ftwo}{\mathbb{F}_2}
\newcommand{\fp}{\mathbb{F}_p}
\newcommand{\Circ}{\mathrm{circ}}
\newcommand{\ints}{\mathbb{Z}}
\newcommand{\ord}{\textup{ord}}
\def \a{\alpha}
\newcommand{\FF}{\mathbb{F}}
\newcommand{\map}[3]{\ensuremath{#1 : #2 \longrightarrow #3}}
\begin{document}

\title{Characterizations of  Mersenne and $2$-rooted primes}
\date{\today}

\author{Sunil K. Chebolu}
\address{Department of Mathematics \\
Illinois State University \\
Normal, IL 61790, USA} \email{schebol@ilstu.edu}
\urladdr{http://math.illinoisstate.edu/schebol/ }

\author{Keir Lockridge}
\address{Department of Mathematics \\
Gettysburg College\\
Gettysburg, PA 17325} \email{klockrid@gettysburg.edu}
\urladdr{http://keir.gettysburgmath.org}

\author{Gaywalee Yamskulna}
\address{Department of Mathematics \\
Illinois State University \\
Normal, IL 61790, USA} \email{gyamsku@ilstu.edu}

\thanks{
The first author is supported by an NSA grant (H98230-13-1-0238) and the third author from a Simons Foundation Collaboration Grant (207862)}

\keywords{Mersenne primes, group algebras, circulant matrices, primitive roots, bipartite graphs}
\subjclass[2000]{Primary 11A41, 11A07; Secondary 15B99, 05C90}

\begin{abstract}
We give several characterizations of Mersenne primes (Theorem \ref{main}) and of primes for which $2$ is a primitive root (Theorem \ref{main2}). These characterizations involve group algebras, circulant matrices, binomial coefficients, and bipartite graphs.
\end{abstract}

\maketitle
\thispagestyle{empty}

\tableofcontents

\section{Introduction}

A Mersenne prime is a prime number of the form $2^n-1$ for some positive integer $n$.  (It is easy to see that if $n$ is composite, then so is $2^n-1$. Therefore we may assume that $n$ is prime in the definition of a Mersenne prime.)   These primes were named after the French monk Marin Mersenne who studied them in the early 17th century, but they appeared much earlier. In the 4th century BC, Euclid showed that   if $2^p-1$ is prime for a prime $p$, then $2^{p-1}(2^p-1)$ is a perfect number.  (Recall that a positive integer  is perfect if it equals the sum of its proper divisors.)  Euler studied these primes in the 17th century when he proved the converse of Euclid's theorem.  We refer the reader to \cite{Burton} for a historical survey on Mersenne primes. In the late 1990s, the GIMPS (Great Internet Mersenne Prime Search) project rejuvenated interest in these primes.  Although there has been much theoretical and computational research on Mersenne primes, basic questions about them remain open. For instance, it is not known whether there are infinitely many such primes.

In this paper we obtain several characterizations of Mersenne primes (Theorem \ref{main}) and also of primes for which $2$ is a primitive root (Theorem \ref{main2}).  Some of these characterizations are obtained by studying groups of units in group algebras; others are based on  binomial coefficients, circulant matrices, and  bipartite graphs.  Our characterizations will therefore  translate theorems, unsolved problems, and conjectures  about these primes into  other areas of mathematics including commutative algebra, linear algebra and graph theory. 

Our results on Mersenne primes are summarized in the following theorem. Let $C_p$ denote the (multiplicative) cyclic group of order $p$.

\begin{thm}[Mersenne Primes] \label{main}
Let $p > 3$ be a prime. Then the following statements are equivalent.
\begin{enumerate}
\item  The prime $p$ is a Mersenne prime.
\item  There is an non-trivial abelian group $G$ and a field $k$ such that every non-trivial unit in $kG$ has order $p$.
\item  There is a non-trivial group $G$ and a field $k$ such that every non-trivial unit in $kG$ has order $p$.
\item  Every non-trivial unit in $\ftwo C_p$ has order $p$.
\item $(1+x+x^2)^p = 1$ in $\ftwo C_p$, where $x$ is a generator of $C_p$.
\item $(1+x)^p$ = $(1+x^3)^p$ in $\ftwo C_p$, where $x$ is a generator of $C_p$.
\item${p \choose r} \equiv {p \choose 3r \mod p} \mod 2$ for all $1 \le r \le p-1$.
\item The group of  $p \times p$ invertible circulant matrices over $\ftwo$ is an elementary abelian $p$-group.
\item The circulant matrix  $\Circ(1, 1, 1, 0, 0, \cdots, 0)$ is of order $p$ in the ring of $p \times p$ matrices over $\ftwo$. (See Section \ref{matrices} for  definitions.)
\item $[\Circ(1,1,0, 0, \cdots, 0)]^p = [\Circ(1,0,0,1, 0, \cdots, 0)]^p$  in the ring of  $p \times p$ matrices over $\ftwo$.
\item Every circulant  $(p ,p)$ bipartite graph with odd number of perfect matchings has
$s_{ij}(p) \mod 2 = \delta_{ij}$, where $s_{ij}(p)$ is the number of pseudopaths between vertex $a_i$ and vertex $b_j$ and $\delta_{ij}$ is the Kronecker delta symbol. (See Section \ref{graphs} for  definitions.)
\item The $(p,p)$ bipartite graph corresponding to the $p \times p$ circulant matrix  \break $\Circ(1, 1, 1, 0, 0, \cdots, 0)$ has
$s_{ij}(p) \mod 2 = \delta_{ij}$.
\end{enumerate}
\end{thm}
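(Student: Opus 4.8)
The plan is to set up a dictionary that collapses the twelve conditions into a few genuinely different assertions, prove the core equivalence through the unit group of $\ftwo C_p$, and then connect the remaining, more computational, conditions. First I would record the ring isomorphisms $\ftwo C_p \cong \ftwo[x]/(x^p-1) \cong R_p$, where $R_p$ is the ring of $p\times p$ circulant matrices over $\ftwo$, sending a generator $x$ of $C_p$ to the cyclic shift $\Circ(0,1,0,\dots,0)$. Under this dictionary $1+x+x^2 \leftrightarrow \Circ(1,1,1,0,\dots,0)$, $1+x \leftrightarrow \Circ(1,1,0,\dots,0)$ and $1+x^3 \leftrightarrow \Circ(1,0,0,1,0,\dots,0)$, so that (4)$\Leftrightarrow$(8), (5)$\Leftrightarrow$(9) and (6)$\Leftrightarrow$(10) are literal translations (a nontrivial unit of order $p$ is the same datum as a circulant of order $p$, and a commutative group in which every nontrivial element has order $p$ is elementary abelian). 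For the graph conditions I would check from the definitions in Section \ref{graphs} that $s_{ij}(p)$ is the $(i,j)$ entry of the $p$-th power of the associated biadjacency (circulant) matrix, and that a circulant bipartite graph has an odd number of perfect matchings iff its matrix is invertible over $\ftwo$ (permanent equals determinant in characteristic $2$); this makes (11)$\Leftrightarrow$(8) and (12)$\Leftrightarrow$(9) immediate. It then remains to treat the cluster $\{(1),(2),(3),(4)\}$, the cluster $\{(5),(6),(7)\}$, and one bridge between them.

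The engine is the structure of $U(\ftwo C_p)$. Writing $d=\ord_p(2)$ and using that $x^p-1=(x+1)\Phi_p(x)$ with $\Phi_p$ splitting over $\ftwo$ into $(p-1)/d$ irreducible factors of degree $d$, the Chinese Remainder Theorem gives $\ftwo C_p \cong \ftwo \times \FF_{2^d}^{(p-1)/d}$ and hence $U(\ftwo C_p)\cong (C_{2^d-1})^{(p-1)/d}$. Every nontrivial element of this group has order $p$ iff $2^d-1=p$, and $2^d-1=p$ with $d=\ord_p(2)$ is precisely the statement that $p$ is a Mersenne prime; this proves (1)$\Leftrightarrow$(4). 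For (2) and (3) I would argue (1)$\Rightarrow$(2) by taking $G=C_p$, $k=\ftwo$, and (2)$\Rightarrow$(3) trivially. For (3)$\Rightarrow$(1), the scalars $k^\times$ are units, so if $|k|>2$ then $k^\times$ is a nontrivial group of exponent $p$, forcing $|k|=p+1$; since $p+1$ is an even prime power it equals $2^s$ and $p$ is Mersenne. If $|k|=2$, any nontrivial $g\in G$ is a unit, hence of order $p$, so $C_p\hookrightarrow G$ and $\ftwo C_p \hookrightarrow \ftwo G$ reduces us to (4).

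To connect the two clusters I would first note that for $p>3$ the element $1+x+x^2$ is a unit of $\ftwo C_p$: in the CRT factors it maps to $1+\zeta+\zeta^2=(1+\zeta^3)/(1+\zeta)$, which is nonzero because a $p$-th root of unity $\zeta$ with $p>3$ cannot be a primitive cube root of unity. Hence (4)$\Rightarrow$(5) is immediate, as a nontrivial unit must have order $p$. For (5)$\Leftrightarrow$(6) I would use the identity $(1+x)(1+x+x^2)=1+x^3$; raising to the $p$-th power gives $(1+x)^p(1+x+x^2)^p=(1+x^3)^p$, and since $1+x+x^2$ is a unit one passes between $(1+x+x^2)^p=1$ and $(1+x)^p=(1+x^3)^p$ by working in each CRT factor (in the augmentation factor $1+x\mapsto 0$ but there $(1+x+x^2)^p\mapsto 1$ automatically, while in the field factors $1+x$ is invertible). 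Finally (6)$\Leftrightarrow$(7) comes from expanding $(1+x)^p=\sum_{r=1}^{p-1}\binom{p}{r}x^r$ and $(1+x^3)^p=\sum_{r=1}^{p-1}\binom{p}{r}x^{3r \bmod p}$ in $\ftwo C_p$ and reindexing by the bijection $r\mapsto 3r$ on $(\ints/p)^\times$ (here $3$ is invertible since $p>3$).

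The crux—and the step I expect to be the main obstacle—is closing the cycle, that is, proving (5)$\Rightarrow$(1) (equivalently (7)$\Rightarrow$(1)). The easy directions only show that the specific unit $1+x+x^2$ has order $p$ whenever $p$ is Mersenne; I must conversely show that this single unit having order $p$ forces $2^d-1=p$. In the CRT factors this says $1+\eta+\eta^2\in\mu_p$ for every $p$-th root of unity $\eta\in\FF_{2^d}$, where $\mu_p$ is the unique subgroup of order $p$; equivalently, in binomial form, $\binom{p}{r}\bmod 2$ is constant on the cosets of $\langle 3\rangle$ in $(\ints/p)^\times$. I would attempt to show this forces $\mu_p\cup\{0\}$ to be closed under addition, hence to be a subfield of $\FF_{2^d}$ containing a generator $\zeta$, hence all of $\FF_{2^d}$, giving $2^d-1=p$. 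The difficulty is that $1+\eta+\eta^2\in\mu_p$ yields only one additive relation per $\eta$ and does not obviously deliver closure under addition; equivalently, on the arithmetic side one must prove that for non-Mersenne $p$ the subgroup $\langle 3\rangle$ always contains a residue that is not a binary submask of $p$. A useful first reduction is that $3\in\langle 3\rangle$ and $\binom{p}{3}$ is odd iff $p\equiv 3\pmod 4$, which disposes of the case $p\equiv 1\pmod 4$; the remaining case $p\equiv 3\pmod 4$ is where the real work lies, and I would look for the right combinatorial or character-theoretic invariant (for instance, that the count $2^{s(p)}-2$ of odd binomial coefficients must then be divisible by $\ord_p(3)$) to force $p$ to be all ones in binary.
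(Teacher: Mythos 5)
Your reduction scheme is sound and largely parallels the paper's own: the ring isomorphisms between $\ftwo C_p$, $\ftwo[x]/(x^p-1)$, and circulant matrices give (4)$\Leftrightarrow$(8), (5)$\Leftrightarrow$(9), (6)$\Leftrightarrow$(10); the permanent-equals-determinant observation mod $2$ and the pseudopath interpretation of matrix powers give (11) and (12); the CRT decomposition $\ftwo C_p \cong \ftwo \times \FF_{2^d}^{(p-1)/d}$ with $d = \ord_p(2)$ correctly handles the cluster (1)--(4); and the identity $(1+x)(1+x+x^2) = 1+x^3$ correctly links (5), (6), (7). All of those steps would survive scrutiny.

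However, the proposal is not a proof: the implication that closes the cycle, (7)$\Rightarrow$(1) (equivalently (5)$\Rightarrow$(1)), is exactly the step you flag as the main obstacle, and you do not supply an argument for it. You only reduce it to showing that constancy of the parity of $\binom{p}{r}$ on cosets of $\langle 3 \rangle$ in $(\ints/p\ints)^\times$ forces the binary expansion of $p$ to be all ones, dispose of the case $p \equiv 1 \pmod 4$ via $\binom{p}{3}$, and speculate about a ``combinatorial or character-theoretic invariant'' for $p \equiv 3 \pmod 4$. That missing step is where the real content of the theorem lies. The paper closes it (Proposition \ref{71}) with a direct induction that needs neither the case split nor any character theory: one proves $\binom{p}{2^s} \equiv 1 \pmod 2$ for every $2^s \le p$, which suffices by Proposition \ref{2^i}. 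By Lucas' theorem (Theorem \ref{Lucas}), the induction hypothesis for $i < s$ says the low-order binary digits $a_0, \dots, a_{s-1}$ of $p$ all equal $1$. Since $3 \nmid 2^s$, either $2^s+1 = 3k$ or $2^s+2 = 3r$ with $k, r < 2^s$; hypothesis (7) gives $\binom{p}{2^s+1} \equiv \binom{p}{k}$ (resp.\ $\binom{p}{2^s+2} \equiv \binom{p}{r}$), and the right-hand side is odd by Lucas' theorem because $k$ (resp.\ $r$) is supported on digits where $p$ is known to be $1$. The elementary recurrences of Lemma \ref{2facts} --- namely $\binom{p}{2k} \equiv \binom{p}{2k+1} \pmod 2$, and $\binom{p}{4k+2}$ odd implies $\binom{p}{4k}$ odd --- then transfer this parity back to $\binom{p}{2^s}$, completing the induction. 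That pair of recurrences, combined with the dichotomy $3 \mid 2^s+1$ or $3 \mid 2^s+2$, is precisely the invariant you were searching for; without it (or some substitute), your argument establishes only (1)$\Rightarrow$(5)$\Rightarrow$(6)$\Rightarrow$(7) and the theorem's cycle never closes.
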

Note that when $p =3$,  which is a Mersenne prime, the element $1+x+x^2$ is a zero divisor in the ring $\ftwo C_3$ because $(1+x+x^2)(1+x) = 1 +x^3 = 1+1 = 0$.  Therefore the above theorem breaks down partially when $p=3$. However, it will be clear from our analysis that even in this case,  statements 1, 2, 3, 4, 8, and 11 are equivalent.

Several characterizations of Mersenne primes in connection to binomial coefficients, number-theoretic functions,  and units in a ring can be found in the literature. Here we state a few characterizations.
 An odd prime $p$ is Mersenne if and only if all the binomial coefficients ${p \choose r}$  ($0 \le r \le p$) are odd numbers; see \cite[Theorem 8.14]{Kosher}.  The $n$th Catalan number is defined to be the quantity
\[ \frac{1}{n+1} {2n \choose n}.\]
The $p$th Catalan number is odd if and only if $p$ is a Mersenne prime; see \cite[Theorem 8.15]{Kosher}.
 Let $\sigma(n)$ denote the sum of the positive divisors of a positive integer $n$.  The quantity $\sigma(n)$ is a power of $2$ if and only if $n$ is a product of distinct Mersenne primes; see \cite[Example 8.22]{Kosher}. There is a ring $R$ with exactly $p$ units if and only if $p$ is a Mersenne prime; see \cite{David}. Our Theorem \ref{main} adds several items to this list.
 
We now turn our attention to  primes for which $2$ is a primitive root (primes $p$ for which $2$ generates the multiplicative group of the field with $p$ elements).  It is well known that every odd prime $p$ has a primitive root. Since the multiplicative group of the field with $p$ elements is a cyclic group of order $p-1$, we know that in fact  there are $\phi(p-1)$ primitive roots mod $p$, where $\phi$ is Euler's totient function.  However, there is no known formula or even a polynomial-time algorithm for finding  a  primitive root. So one looks at the inverse primitive root problem. That is, we fix an integer $a$ and ask: for which odd primes $p$ will $a$ be a primitive root?  For $a$ to be a primitive root mod $p$, there are some obvious necessary conditions on $a$. For instance, $a$ cannot be $-1$, because $(-1)^2=1$. Similarly, it is easy to see that $a$ cannot be a perfect square because a primitive root has to be a quadratic non-residue mod $p$.  A deep conjecture of Artin says that these
two conditions on $a$ are sufficient to guarantee the existence of infinitely many primes $p$ for which $a$ will be a primitive root.
\vskip 2mm
\noindent
\textbf{Artin's Conjecture:}   Let $a$ be an integer which is not a perfect square and not equal to $-1$. Then $a$ is a primitive root mod $p$ for infinitely many primes $p$.
\vskip 2mm
There is no  single specific value of $a$ for which Artin's conjecture is resolved.  However, it is known that the Generalized Riemann Hypothesis implies Artin's Conjecture. We refer the reader to  \cite{Hooley, Gupta-Murthy} for more details. The smallest positive integer $a$ that satisfies the conditions of Artin's conjecture is $a = 2$.  The corresponding special case of Artin's Conjecture is the statement that there are infinitely many primes $p$ for which $2$ is a primitive root.  We will call such primes $2$-rooted.   In this paper we offer several characterizations of these primes, summarized in the next theorem.

\begin{thm}[2-rooted Primes] \label{main2}
Let $p$ be an odd prime. Then the following are equivalent.
\begin{enumerate}
\item The prime $2$ is a primitive root mod $p$.
\item$|(\ftwo C_p)^\times| = 2^{p-1}-1$.
\item The only units in $\ftwo C_p$ which have order $p$ are the non-identity elements of $C_{p}$.
\item If $\theta$ is an element of $\ftwo C_p$ which is not the norm element (the sum of all the group elements) and which is not in the kernel of the augmentation map, then $\theta$ is a unit.
\item There are $2^{p-1}-1$ invertible circulant matrices of size $p \times p$ over $\ftwo$.
\item The only invertible circulant matrices over $\ftwo$ that have order dividing $p$ are the circulant permutation matrices.  (See Section \ref{matrices} for definitions.)
\item If $A$ is a  $p \times p$ circulant matrix over $\ftwo$ which is not $J$ (the $p \times p$ matrix with all 1's) and the vector of all 1's is not in the null space of $A$, then $A$ is invertible over $\ftwo$. 
\item There are $2^{p-1}-1$  circulant $(p,p)$ bipartite graph on labeled vertices with an odd number of perfect matchings.  (See Section \ref{graphs} for  definitions.)
\item If $G$ is a circulant $(p,p)$ bipartite graphs on labeled vertices with odd degree and if $G$ not a complete bipartite graph, then $G$ has an odd number of perfect matchings.
\item If $G$ is a circulant $(p,p)$ bipartite graph on labeled vertices with an odd number of perfect matchings and $s_{ij}(p) \mod 2  = \delta_{ij}$ for all $1\le i , j \le p$, then the degree of $G$ is 1. (See Section \ref{graphs} for  definitions.)
\end{enumerate}
Moreover, if any of these statements hold, then $p \equiv 3 \text{ or }  5 \mod 8.$
\end{thm}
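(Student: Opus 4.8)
The plan is to route all ten statements through a single structural description of the group algebra $\ftwo C_p$ and then transfer the resulting equivalences to circulant matrices and to bipartite graphs by two standard dictionaries. The whole theorem then reduces to recognizing that each statement is a reformulation of the condition ``$\Phi_p$ is irreducible over $\ftwo$.''

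First I would record the central computation. Identifying $\ftwo C_p$ with $\ftwo[x]/(x^p+1)$ (with $x$ a generator of $C_p$) and factoring $x^p+1=(x+1)\Phi_p(x)$ over $\ftwo$, the cyclotomic factor $\Phi_p$ splits into $(p-1)/d$ distinct irreducible polynomials, each of degree $d=\ord_p(2)$, the multiplicative order of $2$ modulo $p$. Since $\Phi_p(1)=p\equiv 1\pmod 2$, the two factors are coprime, so the Chinese Remainder Theorem gives
\[
\ftwo C_p \;\cong\; \ftwo \times \prod_{i=1}^{(p-1)/d}\mathbb{F}_{2^{d}}.
\]
The key point is that $2$ is a primitive root modulo $p$ precisely when $d=p-1$, that is, when the product above has a single field factor. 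I would then extract statements (1)--(4) from this decomposition. The unit group is $\prod_i \mathbb{F}_{2^{d}}^{\times}$, of order $(2^{d}-1)^{(p-1)/d}$; the elementary bound $(2^{d}-1)^{m}\le (2^d-1)(2^d)^{m-1}=2^{dm}-2^{d(m-1)}<2^{dm}-1$ for $m\ge 2$ shows $|(\ftwo C_p)^{\times}|=2^{p-1}-1$ forces one factor, giving (1)$\Leftrightarrow$(2). Because $d=\ord_p(2)$ we have $p\mid 2^{d}-1$, so each cyclic factor $\mathbb{F}_{2^{d}}^{\times}$ contains a unique subgroup of order $p$; hence there are $p^{(p-1)/d}-1$ units of order exactly $p$, which equals $p-1$ (the non-identity elements $x,\dots,x^{p-1}$) exactly when there is one factor, giving (1)$\Leftrightarrow$(3). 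For (4) I would use idempotents: the augmentation $\varepsilon$ is the first projection, so $\ker\varepsilon=\{0\}\times\prod_i\mathbb{F}_{2^{d}}$, and the norm element $N=1+x+\cdots+x^{p-1}$ is the primitive idempotent $(1,0)$ (it is idempotent since $N^2=pN=N$, and it vanishes at each primitive $p$th root of unity). An element outside $\ker\varepsilon$ and different from $N$ has both coordinates nonzero, so it is a unit if and only if its second coordinate is invertible, which is automatic exactly when the second factor is a field; thus (4)$\Leftrightarrow$(1).

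I would then transfer everything through two dictionaries. The ring of $p\times p$ circulant matrices over $\ftwo$ is isomorphic to $\ftwo C_p$, with the basic shift corresponding to $x$; under it the circulant permutation matrices correspond to the group elements $x^{i}$, the all-ones matrix $J$ to $N$, and the condition that the all-ones vector $\mathbf 1$ lie in the null space to vanishing of $\varepsilon$ (a circulant acts on $\mathbf 1$ by its row sum). This gives (2)$\Leftrightarrow$(5), (3)$\Leftrightarrow$(6), and (4)$\Leftrightarrow$(7). For the bipartite graphs I would use that the number of perfect matchings of a graph with biadjacency matrix $A$ is $\mathrm{per}(A)\equiv\det(A)\pmod 2$, so an odd number of perfect matchings is equivalent to invertibility of $A$ over $\ftwo$; moreover the degree is the common row sum, the complete bipartite graph corresponds to $J$, and the pseudopath condition $s_{ij}(p)\equiv\delta_{ij}\pmod 2$ encodes $A^{p}=I$ over $\ftwo$. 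These translations convert (5), (6), (7) into (8), (10), (9) respectively, completing the equivalences. Finally the concluding congruence follows from quadratic reciprocity: a primitive root modulo $p$ has order $p-1$ and so cannot be a quadratic residue, while the second supplement shows $2$ is a non-residue modulo $p$ exactly when $p\equiv 3$ or $5\pmod 8$.

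The main obstacle I anticipate is not the algebra of (1)--(4), which is forced by the decomposition, but the faithful translation of the combinatorial statements of Section \ref{graphs}: I must verify that the pseudopath counts $s_{ij}(p)$ genuinely compute the entries of $A^{p}$ modulo $2$, so that $s_{ij}(p)\equiv\delta_{ij}$ really says ``$A$ has order dividing $p$,'' and that the degree and completeness hypotheses match the augmentation and $J$ conditions exactly (including the boundary case where $A=J$ must be excluded). The only other point needing care is the elementary lemma that $(2^{d}-1)^{(p-1)/d}=2^{p-1}-1$ forces $d=p-1$, which underpins the counting equivalences (1)$\Leftrightarrow$(2)$\Leftrightarrow$(5)$\Leftrightarrow$(8).
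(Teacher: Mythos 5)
Your proposal is correct and follows essentially the same route as the paper: the factorization of $\Phi_p(x)$ into $(p-1)/\ord_p(2)$ distinct irreducibles of degree $\ord_p(2)$, the resulting decomposition of $\ftwo C_p$ into a product of fields to handle (1)--(4), the circulant-matrix and bipartite-graph dictionaries to obtain (5)--(10), and the second supplement to quadratic reciprocity for the final congruence. The only local variation is that you prove (1)$\Leftrightarrow$(4) directly via the coordinates of the norm idempotent in the product decomposition, whereas the paper deduces it from the cardinality bound $|(\ftwo C_p)^\times|\le 2^{p-1}-1$ together with its equality criterion; both arguments rest on the same decomposition.
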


There is another interesting characterization  of these primes which occurs in connection to the Josephus problem \cite{transitive}. Let $p$ be a odd prime expressed as $2m+1$ for some positive integer $m$.  Then the permutation 
\[(1,2) (1,2,3) \cdots (1,2, \cdots,m)\] 
is transitive (which simply means that it is a single cycle containing all of $1,2, \cdots ,m$) if and only if $p$ is $2$-rooted; see \cite{transitive}.

By combining Theorem \ref{main} and Theorem \ref{main2} we recover the
following result in number theory.

\begin{cor}
The prime $3$ is the only prime which is both Mersenne and $2$-rooted.
\end{cor}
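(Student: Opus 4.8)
The plan is to exploit the cardinality of the unit group $(\ftwo C_p)^\times$, playing off what the two theorems say about it. First I would dispose of $p = 3$ by a direct check: $3 = 2^2 - 1$ is Mersenne, and since $2 \equiv -1 \pmod 3$ generates the order-$2$ group $(\ints/3)^\times$, the prime $3$ is $2$-rooted. (Equivalently, $\ftwo C_3 \cong \ftwo \times \FF_4$ has exactly $2^{3-1} - 1 = 3$ units, matching item (2) of Theorem \ref{main2}.) Thus $3$ is genuinely both Mersenne and $2$-rooted, and it remains only to rule out every prime $p > 3$.

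So suppose $p > 3$ is simultaneously Mersenne and $2$-rooted. Since $p$ is Mersenne, item (4) of Theorem \ref{main} tells us that \emph{every} non-trivial unit of $\ftwo C_p$ has order $p$. Since $p$ is $2$-rooted, item (3) of Theorem \ref{main2} tells us that the \emph{only} units of order $p$ are the $p-1$ non-identity elements of the subgroup $C_p \subseteq (\ftwo C_p)^\times$. Combining these two facts, every non-trivial unit already lies in $C_p$, so the group of units is exactly $C_p$; that is, $|(\ftwo C_p)^\times| = p$.

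On the other hand, because $p$ is $2$-rooted, item (2) of Theorem \ref{main2} computes the same quantity as $|(\ftwo C_p)^\times| = 2^{p-1} - 1$. Equating the two counts forces $p = 2^{p-1} - 1$, i.e.\ $2^{p-1} = p + 1$. A short induction shows $2^{p-1} > p + 1$ for every integer $p \ge 4$, so this equation has no solution with $p > 3$, a contradiction. Hence no prime larger than $3$ can be both Mersenne and $2$-rooted, and together with the first paragraph this proves that $3$ is the only such prime.

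There is no genuine obstacle here: the argument is simply a matter of selecting the right equivalent formulations from each theorem --- the order characterizations (4) and (3) to pin the unit group down as $C_p$, and the counting formula (2) to produce the numerical identity $p = 2^{p-1} - 1$. The only point requiring a little care is the boundary case $p = 3$, where Theorem \ref{main} degenerates (the element $1 + x + x^2$ becomes a zero divisor), so I would handle $p = 3$ separately by the direct verification above rather than routing it through the full list of equivalences.
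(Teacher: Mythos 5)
Your proof is correct and takes essentially the same approach as the paper's own proof (Proposition \ref{3}): both arguments play the two hypotheses off against the structure of the unit group of $\ftwo C_p$ to conclude that this group has order $p$ on one hand and order $2^{p-1}-1$ on the other, so $p = 2^{p-1}-1$, which forces $p=3$. The only cosmetic differences are that the paper identifies the unit group as the cyclic group $C_{2^{p-1}-1}$ via Lemma \ref{numberofunits} and invokes cyclicity, whereas you pin it down as $C_p$ by combining items (4) of Theorem \ref{main} and (3) of Theorem \ref{main2}, and that you spell out the easy inequality $2^{p-1} > p+1$ for $p \ge 4$, which the paper leaves implicit.
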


\noindent This result is often obtained as an immediate consequence of the quadratic reciprocity law. Our proof is different and is relatively more elementary than the one which uses the reciprocity law; see Proposition \ref{3}.

We now explain how we arrived at these characterizations. It is interesting to note that the original problem  which  led to these characterizations  seemed  to have nothing to do with Mersenne primes or $2$-rooted primes.  To explain further, we need  a definition.
A ring $R$ is said to have the  \emph{diagonal property} if its multiplication table has 1's only on the diagonal.  More precisely, this means that whenever $ab=1$ in $R$, then $a = b$.   The diagonal property for rings was introduced by the first author in \cite{24}, where it was shown that $\mathbb{Z}_n$ has the diagonal property if and only if $n$  is a divisor of 24. (In \cite{24}, the reader will find  5 different proofs of this fundamental result.
These proofs are based on the Chinese Remainder Theorem, Dirichlet's theorem on primes in an arithmetic progression, the structure of units in $\mathbb{Z}_n$, the Bertrand-Chebyshev Theorem, and generalizations of the Bertrand-Chebyshev Theorem by Erd\"{o}s and Ramanujan.)
 In \cite{12}, the first author and Mayers proved that the diagonal property holds for the ring of polynomials in $m$ commuting variables over $\mathbb{Z}_n$ if and only if $n$ is a divisor of 12.  (Note that the answer is independent of $m$.)  In \cite{GL}, the second author and Genzlinger consider the proportion of units of order at most 2 in $\mathbb{Z}_n$, proving for example that this proportion is the reciprocal of a prime $p$ if and only if $p$ is a Sophie Germain prime. Continuing this line of research, in Section 2 we investigate the diagonal property for group algebras and prove the following result. Write $C_p^r$ to denote the direct sum of $r$ copies of $C_p$.

\begin{thm}\label{main-2-kg}
Let $G$ be a group and let $k$ be a field. The group algebra $kG$ has the diagonal property if and only if $kG$ is either
$\mathbb{F}_2 C_2^r $ or  $ \mathbb{F}_3 C_2^r $ for some $0 \leq r \le \infty$.
\end{thm}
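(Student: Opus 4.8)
The plan is to recast the diagonal property as a statement about the orders of units and then treat the two implications separately. The key reformulation is that a ring $R$ has the diagonal property if and only if $u = u^{-1}$ for every unit $u$ (equivalently $u^2 = 1$): if $ab = 1$ always forces $a = b$, then any unit and its inverse coincide; conversely, if $u^2 = 1$ for every unit $u$ and $ab = 1$ in a \emph{commutative} ring, then $a$ is a unit and $a = a^{-1} = b$. Since the group algebras in play will turn out to be commutative, I may invoke the commutative form of this equivalence where convenient.

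For the forward implication, suppose $kG$ has the diagonal property. I would apply the reformulation to two especially simple families of units. Each nonzero scalar $\lambda \in k^\times$ gives a unit $\lambda\cdot 1$ with inverse $\lambda^{-1}\cdot 1$, so $\lambda = \lambda^{-1}$ and $\lambda^2 = 1$; thus every element of $k^\times$ is a root of $X^2 - 1$, forcing $|k^\times| \le 2$ and hence $k \cong \ftwo$ or $k \cong \mathbb{F}_3$. Likewise each $g \in G$ is a unit with inverse $g^{-1}\in G$, so $g = g^{-1}$ and $g^2 = 1$; a group of exponent $2$ is abelian, so $G$ is an elementary abelian $2$-group, i.e.\ $G \cong C_2^r$ for a (possibly infinite) cardinal $r$. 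Combining these, $kG$ is one of $\ftwo C_2^r$ or $\mathbb{F}_3 C_2^r$, as claimed. Note that neither of these two steps needs commutativity of $kG$, which is fortunate since at this stage commutativity is exactly what is being established.

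For the reverse implication I must check that each of these algebras genuinely has the diagonal property; since $C_2^r$ is abelian both algebras are commutative, so by the reformulation it suffices to show every unit squares to $1$. In the characteristic-$2$ case I would use that squaring is a ring endomorphism (Frobenius) together with $g^2 = 1$: for $u = \sum_g a_g g$ one gets $u^2 = \sum_g a_g^2 g^2 = (\sum_g a_g)\cdot 1 = \epsilon(u)\cdot 1$, using $a_g^2 = a_g$ in $\ftwo$, where $\epsilon$ is the augmentation; since $\epsilon$ is a ring homomorphism onto $\ftwo$, any unit has $\epsilon(u) = 1$ and hence $u^2 = 1$. In the characteristic-$3$ case Frobenius is cubing, so instead I would decompose the algebra: for finite $r$ the order $2^r$ of $C_2^r$ is prime to $3$, so $\mathbb{F}_3 C_2^r$ is semisimple, and because $C_2^r$ is abelian with $\pm 1 \in \mathbb{F}_3$ all its simple modules are one-dimensional, giving $\mathbb{F}_3 C_2^r \cong \prod \mathbb{F}_3$; in a finite product of copies of $\mathbb{F}_3$ every unit is a tuple of $\pm 1$ and so squares to $1$.

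The main obstacle is bookkeeping for the infinite case $r = \infty$, where the field-decomposition argument does not apply directly. I would dispatch it by a reduction to finitely generated subgroups: any relation $ab = 1$ in $kC_2^r$ involves only finitely many group elements, so $a$ and $b$ lie in $kH$ for some finite subgroup $H \cong C_2^s \le C_2^r$, and an inverse computed in $kH$ remains an inverse in $kC_2^r$. Thus the diagonal property for $kC_2^r$ follows from the finite case applied inside each such $kH$, and this reduction also removes any need for a separate treatment of infinite direct sums in the characteristic-$3$ computation. Apart from this reduction the argument is a sequence of short verifications, with the conceptual content residing entirely in the reformulation and in the choice of which units to test.
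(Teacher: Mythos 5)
Your proof is correct, and its forward direction coincides with the paper's: both test the diagonal property against the scalar units and the group units to force $k \in \{\ftwo, \mathbb{F}_3\}$ (the paper routes this through its classification of $\Delta_2$-fields, you argue directly from the fact that $X^2-1$ has at most two roots in a field) and to force $G$ to have exponent $2$, hence be elementary abelian. Your verification that $\ftwo C_2^r$ has the property is also the paper's argument verbatim (Frobenius squaring plus the augmentation map). Where you genuinely diverge is the $\mathbb{F}_3$ case: the paper avoids all structure theory by observing that cubing is the Frobenius in characteristic $3$, so for $t = \sum k_i g_i$ one computes $t^3 = \sum k_i^3 g_i^3 = \sum k_i g_i = t$ (using $g_i^2=1$ and $\lambda^3=\lambda$ in $\mathbb{F}_3$), whence every unit satisfies $u^2=1$; this computation is uniform in $r$, including $r=\infty$, because every element of a group algebra has finite support. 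You instead invoke Maschke's theorem and the fact that all irreducible representations of an exponent-$2$ abelian group over $\mathbb{F}_3$ are one-dimensional to get $\mathbb{F}_3 C_2^r \cong \prod \mathbb{F}_3$ for finite $r$, then patch the infinite case by reducing any relation $ab=1$ to the group algebra of the finite subgroup generated by the supports of $a$ and $b$. Both routes are valid; the paper's is more elementary and needs no finite/infinite case split, while your decomposition is more structural --- it identifies the ring completely, and is in fact the same Wedderburn-style decomposition the paper deploys later (Section 3) to analyze $\ftwo C_p$. One point where you are more careful than the paper: you note that the implication ``all units square to $1$ implies the diagonal property'' requires commutativity (from $ab=1$ alone, $a$ need not be a unit in a general ring), whereas the paper asserts the equivalence of the diagonal property with being a $\Delta_2$-ring without comment; this is harmless in the paper's proof, since only the unproblematic direction is applied to a general $kG$, but your caveat is the right one.
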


After proving this result, we consider the following natural generalization of the diagonal property. Call a ring $R$ a {\em $\Delta_n$-ring} if $u^n = 1$ for every unit of $R$.  If $n$ is the smallest positive integer such that $R$ satisfies this property, then call $R$ a {\em strict $\Delta_n$-ring}.  A ring $R$ satisfies the diagonal property if and only if $R$ is a $\Delta_2$-ring. This generalization is what led us to Mersenne primes. In Section \ref{section-kg}, we prove the following theorem.

\begin{thm}\label{main-p-kg}
Let $G$ be a non-trivial abelian group, let $k$ be a field, and let $p$ be an odd prime. The group algebra $kG$ is a $\Delta_p$-ring if and only if $p$ is a Mersenne prime and
$kG$ is either  $\ftwo (C_p^r)$ or  $\mathbb{F}_{p+1} (C_p^r)$ for some $0 < r \leq \infty$.
\end{thm}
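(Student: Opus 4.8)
The plan is to compute the unit group of $kG$ directly, reducing the $\Delta_p$ condition to a single arithmetic identity via the Wedderburn decomposition. First I would pin down $G$ and $k$ separately. Since every group element $g\in G$ and every scalar $\lambda\in k^\times$ is a unit of $kG$, the $\Delta_p$ hypothesis forces $g^p=1$ and $\lambda^p=1$. Thus $G$ is a nontrivial abelian group of exponent dividing the prime $p$, hence an elementary abelian $p$-group $G\cong C_p^r$ with $0<r\le\infty$; and every $x\in k$ satisfies $x^{p+1}=x$, so $k$ is finite, say $k=\mathbb{F}_q$, with $k^\times$ cyclic of order $q-1$ and exponent dividing $p$. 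Hence $q-1\in\{1,p\}$, i.e.\ $q\in\{2,\,p+1\}$, and in particular $\Char k\ne p$. I would also record here that when $q=p+1$ the field $\mathbb{F}_{p+1}$ exists only if $p+1$ is a prime power; since $p$ is odd this already forces $p+1=2^n$, so $p$ is Mersenne.

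Next I would reduce to finite rank. Any unit of $kC_p^\infty$, together with its inverse, lies in $kH$ for a finitely generated (hence finite) subgroup $H\cong C_p^{r'}$ and is a unit there, while conversely units of any $kH$ are units of $kG$; thus $kC_p^\infty$ is a $\Delta_p$-ring iff $kC_p^{r'}$ is for every finite $r'$. For finite $r\ge 1$, because $\Char k\ne p$ is coprime to $|C_p^r|=p^r$, Maschke's theorem makes $kC_p^r$ semisimple, and the cyclotomic factorization of $x^p-1$ over $\mathbb{F}_q$ gives the Wedderburn decomposition
\[
k C_p^r \;\cong\; k \times \mathbb{F}_{q^d}^{\,N}, \qquad d = \ord_p(q), \quad N = \frac{p^r - 1}{d},
\]
where the lone copy of $k$ comes from the trivial character and every nontrivial character $C_p^r\to\mu_p$ generates the same field $\mathbb{F}_q(\mu_p)=\mathbb{F}_{q^d}$ (its image is all of $\mu_p$ since $p$ is prime). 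The unit group is therefore $k^\times\times(\mathbb{F}_{q^d}^\times)^N$, whose exponent equals $q^d-1$, the order of the cyclic factor $\mathbb{F}_{q^d}^\times$, which is present because $r\ge 1$.

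With this in hand the equivalence reads off. The ring $kC_p^r$ is a $\Delta_p$-ring exactly when this exponent divides $p$, i.e.\ $q^d-1\mid p$; since $q^d-1=1$ would force $\ord_p(q)=1$ and hence $p\mid 1$, the only option is $q^d=p+1$. For $q=2$ this says $p=2^d-1$ is Mersenne with $d=\ord_p 2$; for $q=p+1$ one has $\ord_p(q)=1$, so $q^d=p+1$ holds automatically and, as already noted, $q=p+1$ forces $p$ Mersenne. Conversely, if $p=2^n-1$ is Mersenne then $\ord_p 2=n$ (a smaller $d$ would give $0<2^d-1<p$ divisible by $p$), so $q^d=2^n=p+1$ when $q=2$, whereas for $q=p+1$ we get $d=1$ and again $q^d=p+1$; in both cases the exponent is exactly $p$. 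As the identity $q^d=p+1$ is independent of $r$, this settles all $0<r\le\infty$ simultaneously.

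I expect the main obstacle to be the exponent computation for the unit group: everything hinges on correctly identifying the Wedderburn factors and the common degree $d=\ord_p(q)$ from the cyclotomic factorization. The delicate point is verifying that \emph{every} nontrivial character yields the same field $\mathbb{F}_{q^d}$ and that the Galois (Frobenius) action is free on nontrivial characters, so that the exponent is genuinely $q^d-1$ and not something larger; once this is secured the Mersenne conclusion, together with the dichotomy $k\in\{\ftwo,\mathbb{F}_{p+1}\}$, falls out of the single equation $q^d=p+1$.
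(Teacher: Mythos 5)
Your proposal is correct, but it takes a more computational route than the paper. The paper's proof shares your opening move (every $g\in G$ and $\lambda\in k^\times$ is a unit, so $G\cong C_p^r$ and $k\in\{\FF_2,\FF_3,\FF_{p+1}\}$ with the last forcing $p$ Mersenne), but then diverges in both directions. For the ``if'' direction the paper avoids semisimplicity entirely: it checks the identity $t^{p+1}=t$ for \emph{every} element $t=\sum k_ig_i$ of $kG$, using only that $p+1$ is a power of $\Char k$, that $G$ has exponent dividing $p$, and that $|k|-1$ divides $p$; this two-line Frobenius argument replaces your entire Wedderburn computation. For the ``only if'' direction the paper reduces to $r=1$ via the subring $\FF_2C_p\subseteq \FF_2C_p^r$ (rather than your character-by-character analysis of $C_p^r$), writes $\FF_2[x]/(x^p-1)$ as a product of fields using only that $x^p-1$ is squarefree, and then needs just the \emph{existence} of one nonlinear irreducible factor of $x^p-1$: the corresponding field factor is a $\Delta_p$-field of order at least $4$, hence $p$ is Mersenne by the field lemma. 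The paper never computes the degrees $\ord_p(2)$ of the factors at this stage --- that finer statement is its Lemma 3.1, proved later and used only for the $2$-rooted results. Your proof, by contrast, establishes the full decomposition $kC_p^r\cong k\times \FF_{q^d}^{(p^r-1)/d}$ with $d=\ord_p(q)$ in one stroke (the Frobenius-orbit argument for why every nontrivial character contributes $\FF_{q^d}$ is exactly right and is the point that needed care), and this buys more than the theorem asks: you get the exact unit group and its exponent $q^d-1$, the equivalence of the $\Delta_p$ condition with the single equation $q^d=p+1$, strictness of the $\Delta_p$ property, and in effect a rank-$r$, arbitrary-$q$ generalization of the paper's Lemmas 3.1 and 3.2. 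The trade-off is that your argument needs Maschke plus character/Galois theory where the paper gets by with freshman's-dream arithmetic and one squarefreeness observation.
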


\noindent It was the proof of this theorem and related investigations that gave us several of the characterizations of Mersenne primes and 2-rooted primes mentioned in Theorems \ref{main} and \ref{main2}.

\vskip 3mm
\noindent \textbf{Organization.} In Section \ref{section-kg}, we find all group algebras which are $\Delta_2$-rings and all abelian group algebras which are $\Delta_p$-rings for $p$ an odd prime. In Section \ref{units}, we examine the structure of the units in $\FF_2C_p$ with a view toward the study of 2-rooted primes. Several characterizations of $2$-rooted primes leading to Theorem \ref{main2} are  obtained in Section \ref{primitiveroots}. The element $1 + x + x^{2}$ in $\ftwo C_{p}$ plays an important role in this paper and is studied in Section \ref{1+x+x^2}, where we obtain a characterization of Mersenne primes in terms of binomial coefficients (see statement 7 in Theorem \ref{main}). We translate our results into the world of circulant matrices in Section \ref{matrices}. Making use of the connection between determinants, permanents, and
perfect matchings,  we provide graph theoretic characterizations of Mersenne and $2$-rooted primes in Section \ref{graphs}. In the last section we demonstrate how to tie up the various results in this paper to obtain  complete proofs of Theorems \ref{main} and \ref{main2}

\section{Group Algebras\label{section-kg}}
In this section we will prove Theorems \ref{main-2-kg} and \ref{main-p-kg}. Let $R$ be a ring and let $R^\times$ denote its group of units. If $R$ is a $\Delta_n$-ring (see \S 1 for the definition), then it is automatically a $\Delta_m$-ring whenever $n$ divides $m$.  Call $R$ a {\em strict $\Delta_n$-ring} if $n$ is the smallest positive integer such that $R$ is a $\Delta_n$-ring.  Subrings of $\Delta_n$-rings are $\Delta_n$-rings, and $R = S \times T$ is a $\Delta_n$-ring if and only if $S$ and $T$ are each $\Delta_n$-rings.

If $R$ is both a field and a $\Delta_n$-ring, we refer to $R$ as a $\Delta_n$-field. Our first lemma characterizes all $\Delta_p$-fields for $p$ a prime.

\begin{lemma}\label{delta-fields}
Let $p$ be prime.  A field $k$ is a $\Delta_p$-field if and only if $k = \mathbb{F}_2$, $k = \mathbb{F}_3$ with $p = 2$, or $k = \mathbb{F}_{p+1}$ with $p$ a Mersenne prime.
\end{lemma}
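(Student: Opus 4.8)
The plan is to translate the defining condition directly into a statement about the cyclic group $k^\times$. Since $k$ is a field, every nonzero element is a unit, so $k$ being a $\Delta_p$-field means precisely that $u^p = 1$ for every $u \in k^\times$; equivalently, every nonzero element of $k$ is a root of the polynomial $x^p - 1$.

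First I would establish finiteness. Over any field the polynomial $x^p - 1$ has at most $p$ roots, so the condition forces $|k^\times| \le p$ and hence $k$ is finite. Write $k = \mathbb{F}_q$ for a prime power $q$; then $k^\times$ is cyclic of order $q - 1$, and since a cyclic group of order $q-1$ has exponent $q-1$, the condition $u^p = 1$ for all $u \in k^\times$ is equivalent to the divisibility $q - 1 \mid p$.

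Because $p$ is prime, the only possibilities are $q - 1 = 1$ or $q - 1 = p$. The first gives $q = 2$, i.e.\ $k = \mathbb{F}_2$, which is a $\Delta_p$-field for every prime $p$ (its only unit is $1$). The second gives $q = p + 1$, so I must determine for which primes $p$ the integer $p+1$ is a prime power. The number-theoretic heart of the argument --- and the step I expect to require the most care --- is this: if $p = 2$ then $q = 3$ is prime and yields $k = \mathbb{F}_3$; if $p$ is odd then $p+1$ is even, and an even prime power must be a power of $2$, so $p + 1 = 2^m$ forces $p = 2^m - 1$ to be a Mersenne prime and $k = \mathbb{F}_{p+1} = \mathbb{F}_{2^m}$.

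Finally I would record the converse, which is immediate in each case: if $k = \mathbb{F}_2$ then $1^p = 1$; if $p = 2$ and $k = \mathbb{F}_3$ then every nonzero element squares to $1$; and if $p = 2^m - 1$ is Mersenne then $\mathbb{F}_{p+1} = \mathbb{F}_{2^m}$ has multiplicative group of order $p$, so every unit satisfies $u^p = 1$. This exhausts the three listed families, completing the equivalence.
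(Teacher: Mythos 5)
Your proof is correct and follows essentially the same route as the paper's: both reduce to the observation that $k^\times$ is cyclic with exponent annihilated by $p$, force $|k^\times| \in \{1, p\}$, and then use the parity of $p+1$ to conclude that for odd $p$ the order $p+1$ must be a power of $2$, making $p$ Mersenne. Your version is slightly more explicit than the paper's in justifying finiteness (via the root count of $x^p - 1$) and in stating the converse cases, but the argument is the same.
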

The field $\FF_2$ is in fact a $\Delta_1$-field and thus automatically a $\Delta_p$-field for all primes $p$.  The other fields are strict $\Delta_p$-fields.
\begin{proof} It is straightforward to verify the `if' direction. For the converse, suppose $k$ is a $\Delta_p$-field. Then all elements of $k^\times$ satisfy $x^p = 1$. This forces $k$ to be a finite field whose multiplicative group is necessarily cyclic, so either $|k^\times| = 1$ or $|k^\times| = p$. Hence, either $|k| = 2$ or $|k| = p+1$. If $|k| = 2$, then $k = \mathbb{F}_2$.  If $|k| = p+1$, then we obtain $\mathbb{F}_3$ if $p = 2$.  If $p > 2$, then $p+1$ must be a power of the characteristic of the field, which must be 2 since $p + 1$ is even. Thus $p$ is a Mersenne prime, and the proof is complete.
\end{proof}

We next provide a list of group algebras which are $\Delta_p$-rings.

\begin{lemma}\label{delta-examples}
Let $p$ be prime, let $k$ be a field, and let $G$ be an elementary abelian $p$-group. The group algebra $kG$ is a $\Delta_p$-ring if any of the following conditions are satisfied:
\begin{enumerate}
\item $k = \FF_3$ and $p = 2$,
\item $k = \FF_{p+1}$ and $p$ is a Mersenne prime,
\item $k = \FF_2$ and $p$ is a Mersenne prime, or
\item $k = \FF_2$ and $p = 2$.
\end{enumerate}
\end{lemma}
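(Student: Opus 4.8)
The plan is to separate the four cases according to whether $\operatorname{char} k$ equals $p$. Only case (4), where $\operatorname{char} k = p = 2$, is non-semisimple and must be handled by hand; the remaining three have $\operatorname{char} k \neq p$ and can be treated uniformly through the Wedderburn decomposition of $kG$ together with Lemma \ref{delta-fields}.

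I would dispatch case (4) first by a Frobenius argument, valid for $G$ of any cardinality. Since $G$ is abelian and $k = \mathbb{F}_2$ has characteristic $2$, the ring $kG$ is commutative of characteristic $2$, so $x \mapsto x^2$ is a ring endomorphism. For a unit $u = \sum_g a_g g$ (finite support, $a_g \in \mathbb{F}_2$) one computes
\[ u^p = u^2 = \sum_g a_g^2\, g^2 = \Bigl(\sum_g a_g\Bigr)1 = \epsilon(u)\,1, \]
using $a_g^2 = a_g$ in $\mathbb{F}_2$ and $g^2 = 1$ (as $G$ is an elementary abelian $2$-group), where $\epsilon$ is the augmentation map. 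As $\epsilon$ is a ring homomorphism and $u$ is a unit, $\epsilon(u) \in \mathbb{F}_2^\times = \{1\}$, whence $u^2 = 1$.

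For cases (1)–(3), I would first reduce to finite $G$: if $uv = 1$ in $kG$, then $u$ and $v$ have finite support and hence lie in $kH$ for the subgroup $H \le G$ generated by those supports, which is a finitely generated, hence finite, elementary abelian $p$-group; thus $u \in (kH)^\times$, and it suffices to prove each such $kH$ is a $\Delta_p$-ring. With $G$ now finite and $\operatorname{char} k \neq p$ (so $\operatorname{char} k \nmid |G|$), Maschke's theorem makes $kG$ semisimple; being commutative, $kG \cong \prod_i K_i$ is a finite product of finite field extensions $K_i/k$. By the product property stated in the text, it suffices to show each $K_i$ is a $\Delta_p$-field, which by Lemma \ref{delta-fields} means showing $K_i \in \{\mathbb{F}_2, \mathbb{F}_{p+1}\}$ (or $K_i = \mathbb{F}_3$ when $p = 2$).

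The crux is identifying the factors $K_i$. Each $K_i$ is generated over $k$ by the images of the generators of $G$, and every such image is a $p$-th root of unity; since $\operatorname{char} k \neq p$, these lie in the group $\mu_p$ of $p$-th roots of unity, which is cyclic of order $p$. Because $\mu_p$ has prime order, the subgroup generated by those images is either trivial or all of $\mu_p$, so $K_i = k$ or $K_i = k(\zeta_p)$ for a primitive $p$-th root of unity $\zeta_p$. It then remains to evaluate $k(\zeta_p)$ in each case: for (1) ($k = \mathbb{F}_3$, $p = 2$) we have $\zeta_2 = -1 \in \mathbb{F}_3$, so every $K_i = \mathbb{F}_3$; for (2) ($k = \mathbb{F}_{p+1}$, $p$ Mersenne) the group $\mathbb{F}_{p+1}^\times$ already has order $p$, so $\zeta_p \in k$ and every $K_i = \mathbb{F}_{p+1}$; for (3) ($k = \mathbb{F}_2$, $p = 2^n - 1$ Mersenne) the order of $2$ modulo $p$ is exactly $n$ (since $2^n \equiv 1$ while $0 < 2^k - 1 < p$ for $0 < k < n$), so $k(\zeta_p) = \mathbb{F}_{2^n} = \mathbb{F}_{p+1}$ and every $K_i$ is $\mathbb{F}_2$ or $\mathbb{F}_{p+1}$. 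In all three cases the factors are $\Delta_p$-fields by Lemma \ref{delta-fields}, which finishes the argument. I expect the main obstacle to be exactly this factor identification in case (3): one must rule out the intermediate fields $\mathbb{F}_{2^d}$ with $1 < d < n$, which need not be $\Delta_p$-fields, and the key point that makes this work is that $\mu_p$ has prime order, so any nontrivial character already sees a primitive $p$-th root of unity and hence generates the full field $\mathbb{F}_{p+1}$.
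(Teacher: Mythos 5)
Your proposal is correct, but for cases (1)--(3) it takes a genuinely different route from the paper. The paper disposes of those three cases with a single element-wise computation: in each of them $p+1$ is a power of $\mathrm{char}\,k$, so $t \mapsto t^{p+1}$ is an iterated Frobenius endomorphism of the commutative ring $kG$, and combining $g^{p+1} = g$ (the exponent of $G$ divides $p$) with $\lambda^{p+1} = \lambda$ for $\lambda \in k$ (since $|k|-1$ divides $p$) yields $t^{p+1} = t$ for \emph{every} $t \in kG$; hence $t^p = 1$ whenever $t$ is a unit. That argument needs no reduction to finite subgroups, no semisimplicity, and no identification of field factors, and it proves the stronger statement that all of $kG$ satisfies the identity $x^{p+1} = x$. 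Your argument instead reduces to finite $G$ via supports, invokes Maschke/Wedderburn to write $kG$ as a finite product of field extensions $K_i$ of $k$, pins down each $K_i$ as $k$ or $k(\zeta_p)$ using the fact that $\mu_p$ has prime order (which correctly rules out the intermediate fields $\FF_{2^d}$, $1 < d < n$, in case (3)), and then applies Lemma \ref{delta-fields} together with the product property. This is heavier machinery, but it is more structural, and it is essentially the decomposition the paper itself uses elsewhere --- in the `only if' direction of Theorem \ref{sec-p-kg} and in Lemma \ref{cyclotomic} and equation (\ref{decomp}); indeed your observation that $\ord_p(2) = n$ for $p = 2^n - 1$ is exactly Lemma \ref{cyclotomic} specialized to Mersenne primes. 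So your proof has the virtue of treating both directions of the classification from one viewpoint, at the cost of a longer chain of standard results. Your case (4) coincides with the paper's proof: both use the Frobenius square together with the augmentation map.
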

\begin{proof}
First, suppose $kG$ is a group algebra satisfying any of the first three conditions. Any element $t \in kG$ is a finite sum $t = \sum k_i g_i$, where $k_i \in k$ and $g_i \in G$.  Now,
\[\begin{aligned}
t^{p+1} & = \left(\sum k_i g_i\right)^{p+1} & \\
& = \sum k_i^{p+1} g_i^{p+1} & \text{(since $p+1$ is a power of $\mathrm{char}\, k$)} \\
& = \sum k_i^{p+1} g_i & \text{(since the exponent of $G$ is 1 or $p$)}\\
& = \sum k_i g_i & \text{(since $|k| - 1$ divides $p$)}\\
& = t.
\end{aligned}\]
If $t$ is a unit, we must have $t^p = 1$, so $kG$ is indeed a $\Delta_p$-ring.

Now suppose $kG = \FF_2 G$ and $p = 2$. The first step of the above argument now fails; for example, in $\FF_2 C_2 \cong \FF_2[x]/(x^2 - 1)$, we have $(1+x)^3 = 0$. Instead, we argue as follows. Consider the augmentation map $\map{\epsilon}{\FF_2G}{\FF_2}$ that sends any element to the sum of its coefficients.  Let $t = \sum k_i g_i \in kG$ be a unit.  Since $\epsilon$ is a $k$-algebra homomorphism, $\epsilon(t) = 1$. We have $$t^2 = \sum k_i^2 g_i^2 = \sum k_i^2 = \sum k_i = \epsilon(t) = 1,$$ so every unit of $\FF_2 G$ has order 2. This completes the proof.
\end{proof}

We are now able to prove Theorems \ref{main-2-kg} and \ref{main-p-kg}, which we restate for the convenience of the reader.  We state these theorems separately because although a group of exponent 2 must be abelian, a group of exponent $p > 2$ need not be abelian, so our result for $\Delta_2$-rings is stronger.

\begin{thm}\label{sec-2-kg}
Let $G$ be a group and let $k$ be a field. The group algebra $kG$ satisfies the diagonal property if and only if $kG$ is either
$\mathbb{F}_2 C_2^r $ or  $ \mathbb{F}_3 C_2^r $ for some $0 \leq r \le \infty$.
\end{thm}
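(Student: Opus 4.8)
The plan is to prove Theorem~\ref{sec-2-kg} by combining the sufficiency already established in Lemma~\ref{delta-examples} with a necessity argument that extracts strong structural constraints on $G$ and $k$ from the diagonal property (equivalently, from $kG$ being a $\Delta_2$-ring). The `if' direction is immediate: cases (1) and (4) of Lemma~\ref{delta-examples} say that $\FF_3 C_2^r$ and $\FF_2 C_2^r$ are $\Delta_2$-rings for every finite $r$, and the infinite case $r=\infty$ follows because every unit and every relation lives in a finitely generated subgroup, so the diagonal property for $kG$ is the union of the finite-rank statements. Thus the real work is the converse.

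For the necessity direction, I would first observe that any subfield of $kG$ sitting inside it as $k\cdot 1$ must itself be a $\Delta_2$-field, so Lemma~\ref{delta-fields} (with $p=2$) forces $k=\FF_2$ or $k=\FF_3$. Next I would pin down the group $G$. Since subrings of $\Delta_2$-rings are $\Delta_2$-rings, for each $g\in G$ the subalgebra $k\langle g\rangle = k C_{\ord(g)}$ must have the diagonal property. The group element $g$ is itself a unit of $kG$, so $g^2=1$; hence every element of $G$ has order dividing $2$, which forces $G$ to be an elementary abelian $2$-group, i.e.\ $G\cong C_2^r$ for some $0\le r\le\infty$. At this point the candidate list is narrowed to $\FF_2 C_2^r$ and $\FF_3 C_2^r$, and it remains only to rule out nothing further, since both of these already appear on the `if' side.

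The step I expect to require the most care is showing that $G$ must be a $2$-group (not merely that each element is a unit), and in particular handling the characteristic-matching subtleties when $k=\FF_3$. Concretely, I would argue that if $G$ contained an element $g$ of order $n>2$, then $g$ and $g^{-1}=g^{n-1}$ are distinct units with $g\cdot g^{-1}=1$, directly violating the diagonal property; this is clean and avoids any genuine obstacle. A slightly more delicate point is confirming that no additional exotic units spoil the $k=\FF_3$ case beyond group elements---but this is precisely what Lemma~\ref{delta-examples}(1) guarantees once we know $G\cong C_2^r$, so no separate verification is needed. The only bookkeeping nuisance is the passage to $r=\infty$, which I would dispatch by the standard finitary remark that any equation $ab=1$ in $kG$ involves finitely many group elements and therefore reduces to the finite-rank case already handled.
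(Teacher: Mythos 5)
Your proposal is correct and takes essentially the same route as the paper's proof: sufficiency via Lemma~\ref{delta-examples}, and necessity by applying Lemma~\ref{delta-fields} to the subring $k \subseteq kG$ together with the observation that every $g \in G$ is a unit, so the diagonal property forces $g = g^{-1}$, making $G$ an elementary abelian $2$-group isomorphic to $C_2^r$. Your extra finitary remarks about the case $r = \infty$ are harmless but unnecessary, since Lemma~\ref{delta-examples} already covers arbitrary elementary abelian groups (every element of $kG$ is a finite sum of group elements).
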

\begin{proof} By Lemma \ref{delta-examples}, it suffices to prove the `only if' direction of the theorem. Let $k$ be a field and let $G$ be a group such that $kG$ is a $\Delta_2$-ring.  Since $k$ is a subring of $kG$, it is a $\Delta_2$-field.  By Lemma \ref{delta-fields}, $k = \FF_2$ or $k = \FF_3$. Since every element of $G$ is a unit, the order of every element of $G$ is a divisor of 2.  This implies $G$ is abelian, so $G$ is an elementary abelian 2-group. Since any such group is isomorphic to $C_2^r$ for some $r$ with $0 \leq r \leq \infty$, the proof is complete.
\end{proof}

\begin{thm}\label{sec-p-kg}
Let $G$ be a non-trivial abelian group, let $k$ be a field, and let $p$ be an odd prime. The group algebra $kG$ is a $\Delta_p$-ring if and only if $p$ is a Mersenne prime and
$kG$ is either  $\ftwo C_p^r$ or  $\mathbb{F}_{p+1} C_p^r$ for some $0 < r \leq \infty$.
\end{thm}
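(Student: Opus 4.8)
The plan is to prove the two directions separately, treating the field $k$ and the group $G$ as nearly independent pieces of data. The `if' direction is immediate: if $p$ is Mersenne and $kG$ is $\ftwo C_p^r$ or $\FF_{p+1}C_p^r$, then $C_p^r$ is an elementary abelian $p$-group, so conditions (2) and (3) of Lemma \ref{delta-examples} apply and $kG$ is a $\Delta_p$-ring. All the content is in the `only if' direction, so assume from now on that $kG$ is a $\Delta_p$-ring.

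First I would pin down $k$ and $G$ using the two preceding lemmas. Since $k$ sits inside $kG$ as a subring and subrings of $\Delta_p$-rings are $\Delta_p$-rings, $k$ is a $\Delta_p$-field; as $p$ is odd, Lemma \ref{delta-fields} leaves only the possibilities $k = \ftwo$, or $k = \FF_{p+1}$ with $p$ Mersenne. Next, every $g \in G$ is a unit satisfying $g^{p} = 1$, so $G$ has exponent dividing the prime $p$; being non-trivial and abelian, $G$ is an elementary abelian $p$-group, hence $G \cong C_p^r$ for some $0 < r \leq \infty$. If $k = \FF_{p+1}$ we are already done, because Lemma \ref{delta-fields} has handed us that $p$ is Mersenne and we have identified $kG = \FF_{p+1}C_p^r$.

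The one remaining case, $k = \ftwo$, is the heart of the argument: I must show that $\ftwo C_p^r$ being a $\Delta_p$-ring forces $p$ to be Mersenne. I would reduce to $r = 1$ by noting that $\ftwo C_p$, the group algebra of a single $C_p$-factor, is a subring of $\ftwo C_p^r$ and is therefore itself a $\Delta_p$-ring. Now $\ftwo C_p \cong \ftwo[x]/(x^p-1)$, and over $\ftwo$ we factor $x^p - 1 = x^p + 1 = (x+1)\,\Phi_p(x)$, where $\Phi_p$ is separable and splits into $(p-1)/d$ distinct irreducible factors, each of degree $d = \ord_p(2)$. By the Chinese Remainder Theorem,
\[ \ftwo C_p \;\cong\; \ftwo \times \FF_{2^d}^{\,(p-1)/d}. \]
Since $p$ is odd we have $2 \not\equiv 1 \pmod{p}$, so $d = \ord_p(2) \geq 2$, and as $\deg \Phi_p = p-1 \geq 2$ at least one genuine factor $\FF_{2^d}$ with $d \geq 2$ appears. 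Because a finite product is a $\Delta_p$-ring precisely when each factor is, this $\FF_{2^d}$ must be a $\Delta_p$-field different from $\ftwo$; Lemma \ref{delta-fields} then forces $2^d = p+1$, i.e.\ $p = 2^d - 1$ is Mersenne. This identifies $kG = \ftwo C_p^r$ with $p$ Mersenne and completes the proof.

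I expect the main obstacle to be the $k = \ftwo$ case, specifically obtaining the decomposition of $\ftwo C_p$ into a product of fields and correctly identifying the common degree of the cyclotomic factors as $\ord_p(2)$. The separability of $x^p+1$ over $\ftwo$ (which holds exactly because $p$ is odd) is what guarantees a CRT splitting into \emph{fields} rather than into local rings carrying nilpotents, and this is precisely the phenomenon whose failure made the $\Delta_2$-argument degenerate at $p=2$.
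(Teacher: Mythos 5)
Your proposal is correct and takes essentially the same route as the paper's proof: handle the `if' direction via Lemma \ref{delta-examples}, pin down $k$ and $G$ via Lemma \ref{delta-fields} and the fact that group elements are units, reduce to $r=1$ by a subring argument, split $\ftwo C_p \cong \ftwo[x]/(x^p-1)$ into a product of fields using separability of $x^p-1$ (here $p$ odd is essential), and apply Lemma \ref{delta-fields} to a field factor larger than $\ftwo$. The only cosmetic difference is that you invoke the precise cyclotomic factorization (every irreducible factor of $\Phi_p(x)$ has degree $\ord_p(2)$, which is the paper's Lemma \ref{cyclotomic}, proved later), whereas the paper gets by with the softer observation that the distinct irreducible factors of $x^p-1$ cannot all be linear, so some factor field has order at least $4$.
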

Note that the statement of the theorem requires $G$ to be non-trivial. This is simply to avoid $kG = \FF_2$, the unique $\Delta_1$-ring that is also a group algebra, because $\FF_2$ is a $\Delta_p$-ring for any prime $p$.  All $\Delta_p$-rings appearing in the statement of the theorem are strict.
\begin{proof} As in the previous proof, it suffices to prove the `only if' direction. Let $G$ be an abelian group and let $k$ be a field.  Again using Lemma \ref{delta-fields} and the fact that every element of $G$ is a unit in $kG$, we have that $G$ is a non-trivial elementary abelian $p$-group (hence $G \cong C_p^r$, where $0 < r \leq \infty$), and $k = \FF_{p+1}$ with $p$ Mersenne or $k = \FF_2$. To complete the proof, we must prove that if $\FF_2 C_p^r$ is a $\Delta_p$-ring, then $p$ is Mersenne. Since $\FF_2 C_p$ is a subring of $\FF_2 C_p^r$, we may without loss of generality assume $r = 1$. The group ring $\mathbb{F}_2 C_p$ is isomorphic to $\mathbb{F}_2[x]/(x^p - 1)$ (the isomorphism sends a generator of $C_p$ to $x$). The irreducible factors of $x^p - 1$ are distinct since this polynomial has no factors in common with its derivative $p x^{p-1}$ over $\mathbb{F}_2$ (recall that $p > 2$). By the structure theorem for modules over a principal ideal domain, this ring is therefore isomorphic to a product of fields of characteristic 2 (each factor has the form $\mathbb{F}_2[x]/(r(x))$ with $r(x)$ irreducible).  At least one such factor must have order greater than 2 since $x^p - 1$ must have at least one non-linear irreducible factor $f(x)$. The corresponding summand $\FF_2[x]/(f(x))$ must be a $\Delta_p$-field of order at least 4, so by Lemma \ref{delta-fields}, $p$ is a Mersenne prime.
\end{proof}

Although we assume $G$ is abelian in the odd primary case, one may draw a more general conclusion in one direction. For any group $G \neq \{e\}$, if $kG$ is a $\Delta_p$-ring for an odd prime $p$, then $p$ is a Mersenne prime, $k$ is a $\Delta_p$-field, $G$ has exponent $p$, and $kG$ contains $kC_p$ as a $\Delta_p$-subring. This observation, together with Theorem \ref{sec-p-kg}, demonstrates the equivalence of the first four statements of Theorem \ref{main}.
\section{Units in $\ftwo C_p$} \label{units}
With a view toward the study of 2-rooted primes in the next section, we now take a closer look at the structure of units in $\FF_2 C_p$. We begin with a useful lemma on the factorization of cyclotomic polynomials.

\begin{lemma} \label{cyclotomic}
Let $p$ be an odd prime. The cyclotomic polynomial
\[ \Phi_p(x) := 1 + x + x^2 + \cdots x^{p-1}\]
factors as a product of $(p-1)/\ord_p (2)$ distinct irreducible polynomials in $\ftwo[x]$ of degree $\ord_p (2)$, where
$\ord_p (2)$ is the smallest positive integer $t$ such that $2^{t} \equiv 1 \mod p$.
\end{lemma}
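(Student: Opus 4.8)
The plan is to relate the irreducible factorization of $\Phi_p(x)$ over $\ftwo$ to the orbits of the Frobenius map $a \mapsto a^2$ acting on the set of primitive $p$th roots of unity. Since $p$ is an odd prime, $\Phi_p(x) = (x^p-1)/(x-1)$ is separable over $\ftwo$ (it shares no root with its derivative, as $\gcd(x^p-1, px^{p-1}) = 1$ because $p$ is odd), so it factors into \emph{distinct} irreducibles. The roots of $\Phi_p$ are exactly the $p-1$ primitive $p$th roots of unity, which live in the splitting field $\ftwo(\zeta)$ for a fixed primitive root $\zeta$.

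The key step is to identify the degree of each irreducible factor with $\ord_p(2)$. I would fix a primitive $p$th root of unity $\zeta$ and recall that the minimal polynomial of $\zeta$ over $\ftwo$ is the product $\prod_{\sigma} (x - \sigma(\zeta))$ over the distinct Galois conjugates of $\zeta$. The Galois group $\Gal(\ftwo(\zeta)/\ftwo)$ is cyclic, generated by the Frobenius $\phi \colon a \mapsto a^2$, and the conjugates of $\zeta$ are precisely $\zeta, \zeta^2, \zeta^4, \dots, \zeta^{2^{t-1}}$, where $t$ is the least positive integer with $\zeta^{2^t} = \zeta$, i.e.\ with $2^t \equiv 1 \pmod p$. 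This $t$ is exactly $\ord_p(2)$, so $[\ftwo(\zeta):\ftwo] = \ord_p(2)$ and the minimal polynomial of each primitive root has degree $\ord_p(2)$. Equivalently, the cyclic group $\langle 2 \rangle \subseteq (\ints/p)^\times$ acts on the exponents $\{1, 2, \dots, p-1\}$ by multiplication, and each orbit — which has size $\ord_p(2)$ — corresponds to one irreducible factor.

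Once the degree of every irreducible factor is pinned down to $\ord_p(2)$, a counting argument finishes the proof: the $p-1$ primitive roots partition into orbits under $\langle 2\rangle$, each of size $\ord_p(2)$, giving exactly $(p-1)/\ord_p(2)$ orbits and hence $(p-1)/\ord_p(2)$ distinct irreducible factors, each of the asserted degree. I would note that the product of these factors has degree $(p-1)/\ord_p(2) \cdot \ord_p(2) = p-1 = \deg \Phi_p$, confirming we have accounted for all of $\Phi_p$.

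The main obstacle, such as it is, lies in cleanly establishing that the conjugates of $\zeta$ under Frobenius are exactly the $\zeta^{2^i}$ and that the number of them equals $\ord_p(2)$; this is really the statement that the order of $2$ in $(\ints/p)^\times$ governs the splitting field degree. Everything else — separability and the orbit-counting — is routine. An alternative, equally clean route avoids explicit roots: since $\ftwo[x]/(f(x)) \cong \FF_{2^d}$ for an irreducible factor $f$ of degree $d$, and $\FF_{2^d}$ contains a primitive $p$th root of unity if and only if $p \mid 2^d - 1$, i.e.\ iff $\ord_p(2) \mid d$, the minimal such $d$ is $\ord_p(2)$; combined with separability and the degree count this again yields the result.
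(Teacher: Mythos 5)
Your proof is correct and takes essentially the same route as the paper's: both arguments hinge on showing that the degree of the minimal polynomial over $\ftwo$ of a primitive $p$th root of unity is the least $t$ with $2^t \equiv 1 \pmod p$, and both obtain distinctness of the factors from the derivative criterion applied to $x^p-1$. Your Frobenius-orbit phrasing and the paper's ``smallest subfield $\FF_{2^n}$ containing a root'' phrasing are two presentations of the same key fact, and the final orbit/degree count is the same.
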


This result can be found in \cite[Page 556, Exercise 8]{DummitFoote}. We give a proof here for completeness.

\begin{proof}
Let $p(x)$ be an arbitrary irreducible factor of $\Phi_p(x)$ over $\ftwo[x]$ and let $\alpha$ be a root of $p(x)$ in $\overline{\ftwo}$.  Then we have
$\deg p(x) = \dim_{\ftwo} [\ftwo(\alpha) \colon \ftwo] $.  Since $\alpha$ is a root of $p(x)$, it is also a root of $x^p-1$, and it is not 1. Therefore  $\alpha$ is a  primitive $p$th root of $1$ in $\overline{\ftwo}$. Let $n$ be the smallest integer such that $\alpha$ is contained in $\mathbb{F}_{2^n}$.
Since the multiplicative group of a finite field is cyclic, it follows that $n$ is the smallest integer such that $\alpha^{2^n-1} = 1$ in $\overline{\ftwo}$. On the other hand  since $\alpha$ is also a primitive $p$th  root of unity, we have $\alpha^p = 1$ in  $\overline{\ftwo}$. Combining the last two facts, we conclude that $n$ is the smallest positive integer such that $2^n -1$ is a multiple of $p$.  That is, $n$ is precisely the order of $2$ in $\mathbb{F}_p$.  Thus we have
\[\deg p(x) = \dim_{\ftwo } [\ftwo(\alpha) \colon \ftwo] = \dim_{\ftwo } [\mathbb{F}_{2^n} \colon \ftwo] = n = \ord_p(2).\]
Since $p(x)$ was chosen to be an arbitrary irreducible factor, it follows that every irreducible factor of $\Phi_p(x)$ has degree $\ord_p(2)$.  Finally, note that all the irreducible factors of $x^p -1$, and hence also of $\Phi_p(x)$, are distinct because $x^p-1$ and its derivative ($px^{p-1}$) share no common factors in $\ftwo[x]$.
\end{proof}

Recall that there is a ring isomorphism
\[ \ftwo C_p \cong \frac{\ftwo[x]}{(x^p - 1)},\]
where the isomorphism takes a generator of $C_p$ to $x$.
The polynomial $x^p-1$ factors as  $(x-1)\Phi_p(x)$, where $\Phi_p(x)$ is the cyclotomic polynomial
\[ \Phi_p(x) = 1 + x+ x^2 + \cdots x^{p-1}.\]
Using Lemma \ref{cyclotomic}, we can write $x^p-1$ in $\ftwo[x]$ as
\[ x^p-1 = (x-1) \times \prod_{i = 1}^{(p-1)/\ord_p (2)} p_i(x),\]
where the polynomials $p_i(x)$ are distinct irreducible polynomials of degree $\ord_p(2)$. By the structure theorem for modules over a PID, we have
\begin{equation} \frac{\ftwo[x]}{(x^p-1)} \cong \frac{\ftwo[x]}{(x-1)}  \times \prod_{i=1}^{(p-1)/\ord_p(2)} \frac{\ftwo[x]}{(p_i(x))}.  \label{decomp}\end{equation}
Since each $p_i(x)$ is irreducible and has degree $\ord_p(2)$, we have
\[ \frac{\ftwo[x]}{(x^p-1)} \cong \ftwo \times \prod_{i=1}^{(p-1)/\ord_p(2)} \FF_{2^{\ord_p (2)}}, \]
a product of finite fields.

Taking units on both sides of the last isomorphism, we obtain
\[ \left( \frac{\ftwo[x]}{(x^p-1)} \right)^\times \cong (\ftwo)^\times \times \prod_{i=1}^{(p-1)/\ord_p(2)} \left( \FF_{2^{\ord_p (2)}} \right)^\times . \]
Since the multiplicative group of a finite field is always a cyclic group, we have the following result.

\begin{lemma}\label{numberofunits}
For any odd prime $p$,
\[ (\ftwo C_p)^\times \cong  \prod_{i=1}^{(p-1)/\ord_p(2)} C_{2^{\ord_p (2)}-1}. \]
In particular,
 \[ | (\ftwo C_p)^\times | = (2^{\ord_p (2)} -1)^{\frac{(p-1)}{\ord_p (2)}}. \]
\end{lemma}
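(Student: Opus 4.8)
The plan is to read off the result directly from the isomorphism
\[ \left( \frac{\ftwo[x]}{(x^p-1)} \right)^\times \cong (\ftwo)^\times \times \prod_{i=1}^{(p-1)/\ord_p(2)} \left( \FF_{2^{\ord_p (2)}} \right)^\times \]
already established just above the statement, since all of the substantive work has been carried out in Lemma \ref{cyclotomic} and the structure-theorem decomposition of \eqref{decomp}. First I would observe that the factor $(\ftwo)^\times$ is the trivial group: the only nonzero element of $\ftwo$ is $1$, so this factor contributes nothing and may be discarded from the product. This is exactly the reason the index $i$ in the final product ranges only over the $p_i(x)$ and not over the linear factor $(x-1)$.

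Next I would identify the remaining factors. Because the multiplicative group of any finite field is cyclic, each group $(\FF_{2^{\ord_p(2)}})^\times$ is cyclic of order $2^{\ord_p(2)} - 1$, hence isomorphic to $C_{2^{\ord_p (2)}-1}$. Lemma \ref{cyclotomic} guarantees that there are exactly $(p-1)/\ord_p(2)$ such factors (one for each distinct irreducible factor $p_i(x)$ of $\Phi_p(x)$), so substituting these identifications into the displayed isomorphism and combining with the ring isomorphism $\ftwo C_p \cong \ftwo[x]/(x^p-1)$ yields
\[ (\ftwo C_p)^\times \cong \prod_{i=1}^{(p-1)/\ord_p(2)} C_{2^{\ord_p (2)}-1}, \]
which is the first assertion. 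The order formula then follows by taking cardinalities: the discarded trivial factor contributes $1$ and each of the $(p-1)/\ord_p(2)$ cyclic factors contributes $2^{\ord_p(2)}-1$, so
\[ |(\ftwo C_p)^\times| = \left( 2^{\ord_p (2)} - 1 \right)^{(p-1)/\ord_p (2)}. \]

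There is essentially no remaining obstacle, as the difficult step—counting the irreducible factors and pinning down their common degree—is precisely the content of Lemma \ref{cyclotomic}. The only point worth flagging for rigor is that the exponent $(p-1)/\ord_p(2)$ is genuinely an integer, which holds because $\ord_p(2)$ is the order of $2$ in the cyclic group $\FF_p^\times$ of order $p-1$ and therefore divides $p-1$ by Lagrange's theorem; this both justifies writing the product with integer bounds and confirms consistency of the degree count, since the $(p-1)/\ord_p(2)$ factors of degree $\ord_p(2)$ account for the full degree $p-1$ of $\Phi_p(x)$.
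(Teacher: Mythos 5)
Your proposal is correct and follows essentially the same route as the paper: the paper derives the lemma precisely by taking units in the decomposition \eqref{decomp} obtained from Lemma \ref{cyclotomic} and the structure theorem, discarding the trivial factor $(\ftwo)^\times$, and invoking cyclicity of finite field unit groups. Your added remark that $\ord_p(2)$ divides $p-1$ (by Lagrange's theorem) is a worthwhile point of rigor that the paper leaves implicit.
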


\begin{rem}
The foregoing discussion unfolds nearly identically if one replaces the prime 2 with a prime $q \neq p$. Our decision to only consider the case $q = 2$ is motivated by our particular interest in 2-rooted primes and our desire to connect the present work to graph theory, where the entries of adjacency matrices are elements of $\FF_2$. 
\end{rem}

We now turn our attention to 2-rooted primes.

\section{$2$-rooted Primes} \label{primitiveroots}

An odd  prime $p$ is said to be $2$-rooted if $2$ is a primitive root mod  $p$. This is equivalent to saying that $2$ generates the multiplicative group of $\fp$. Therefore $p$ is $2$-rooted precisely when $\ord_p (2) = p-1$.
In this section we will characterize $2$-rooted primes by studying the units in $\ftwo C_p$. In particular, we will show that the first four statements of Theorem \ref{main2} are equivalent.

\begin{cor} \label{cor:boundforunits}
Let $p$  be an odd  prime. Then
 \[ | (\ftwo C_p)^\times | \le (2^{p-1} -1).\]
Moreover, equality holds if and only if $p$ is $2$-rooted.
\end{cor}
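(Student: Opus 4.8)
The plan is to use Lemma \ref{numberofunits}, which gives the exact formula
\[
|(\ftwo C_p)^\times| = (2^{\ord_p(2)} - 1)^{(p-1)/\ord_p(2)}.
\]
Writing $d = \ord_p(2)$, so that $d \mid (p-1)$ and $d \ge 1$, the corollary reduces to the purely numerical claim that
\[
(2^d - 1)^{(p-1)/d} \le 2^{p-1} - 1,
\]
with equality precisely when $d = p-1$ (which is exactly the condition that $p$ is $2$-rooted, as noted at the start of the section). So the entire problem is an elementary inequality in the integers $d$ and $p$.

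First I would dispose of the equality case in each direction. If $p$ is $2$-rooted then $d = p-1$, the exponent $(p-1)/d$ equals $1$, and the formula gives exactly $2^{p-1}-1$; this is the easy direction and needs no inequality at all. Conversely, if $d = p-1$ the same computation shows equality holds, so once the strict inequality is established for all proper divisors $d < p-1$, the ``equality iff $2$-rooted'' statement follows immediately.

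The core step is therefore the strict inequality $(2^d-1)^{m} < 2^{p-1}-1$ whenever $m = (p-1)/d \ge 2$. I would prove the general fact that for integers $d \ge 1$ and $m \ge 2$,
\[
(2^d - 1)^m < 2^{dm} - 1.
\]
Since $p-1 = dm$, the right-hand side is exactly $2^{p-1}-1$, so this suffices. To see the displayed inequality, note that $2^{dm} - 1 = (2^d - 1)(2^{d(m-1)} + 2^{d(m-2)} + \cdots + 1)$, and one compares the second factor against $(2^d-1)^{m-1}$; alternatively, and perhaps most cleanly, one observes $(2^d - 1)^m < (2^d)^m = 2^{dm}$ for the upper bound and then checks that the gap is large enough to absorb the $-1$, i.e. that $(2^d-1)^m \le 2^{dm} - 2^{d(m-1)} \le 2^{dm} - 1$ when $m \ge 2$ and $d \ge 1$. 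A short induction on $m$ (base case $m=2$: $(2^d-1)^2 = 2^{2d} - 2^{d+1} + 1 < 2^{2d} - 1$ since $2^{d+1} > 2$) makes this rigorous.

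I do not expect any serious obstacle here: the substance was already carried out in deriving Lemma \ref{numberofunits}, and what remains is a routine estimate. The only point requiring a little care is keeping the inequality \emph{strict} in the non-$2$-rooted case and confirming that $m=1$ is the unique case of equality, so that the biconditional is correctly stated; the induction above handles exactly the $m \ge 2$ regime and cleanly separates it from the equality case $m = 1$.
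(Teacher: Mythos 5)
Your proof is correct and takes essentially the same approach as the paper: both reduce the statement via Lemma \ref{numberofunits} to the elementary numerical inequality $(2^d-1)^{(p-1)/d} \le 2^{p-1}-1$ (where $d = \ord_p(2)$), with equality exactly when $d = p-1$. The only difference is that the paper asserts this inequality as an unproved ``elementary fact,'' whereas you supply an explicit (and correct) induction establishing it, including the strictness needed for the biconditional.
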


Note that this shows that statements (1) and (2) in Theorem \ref{main2} are equivalent.

\begin{proof}
This follows immediately from Lemma \ref{numberofunits} and the following more general and elementary fact.
If $a$ and $b$ are positive integers such that $a$ divides $b$,  then
\[ (2^a -1)^{b/a} \le 2^{b} -1,\]
and equality holds if and only if $a = b$. The corollary follows when we apply this elementary fact to $a = \ord_p(2)$ and  $b = p-1$.
\end{proof}

\begin{rem}
One can see this upper bound directly as follows. Consider the augmentation map $\epsilon \colon \ftwo C_p \rar \ftwo$ which sends an element  of $\ftwo C_p$ to the sum of its coefficients.  The kernel of this map is an $\ftwo$ subspace of index $2$ in $\ftwo C_p$. Since no element in this kernel can be a unit, we have $ |(\ftwo C_p)^\times| \le 2^{p-1}$. Moreover,  the norm element $\eta = 1 + x + x^2 + \cdots + x^{p-1}$, where is $x$ is a generator of $C_p$, is a zero divisor because $\eta (1-x) = 0$.  In particular, $\eta$ is not a unit. Since $p$ is an odd prime, $\eta$ does not belong to the kernel of $\epsilon$. Hence
$|(\ftwo C_p)^\times| \le 2^{p-1}-1$.
\end{rem}

This remark implies the following corollary which shows the equivalence of statements (1) and (4) in Theorem \ref{main2}.

\begin{cor} \label{augmentation} Let $p$ be an odd prime and let $\theta$ be an element of the ring $\ftwo C_p$.   If $\theta$ is a unit, then $\epsilon (\theta) \ne 0$ and $\theta \ne \eta$. Moreover, the converse holds precisely when  $p$ is $2$-rooted.
\end{cor}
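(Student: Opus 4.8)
The plan is to obtain the forward implication directly from the remark preceding the corollary, and then to reduce the biconditional to a simple cardinality count that feeds into Corollary \ref{cor:boundforunits}.

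First I would dispatch the forward direction. If $\theta$ is a unit, then applying the ring homomorphism $\epsilon$ to the relation $\theta\theta^{-1} = 1$ gives $\epsilon(\theta)\epsilon(\theta^{-1}) = 1$ in $\ftwo$, so $\epsilon(\theta) \neq 0$; and $\theta \neq \eta$ because the identity $\eta(1-x) = 0$ exhibits $\eta$ as a zero divisor, hence a non-unit. Both of these are precisely the observations recorded in the remark just above, so this half is immediate.

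Next, for the converse, I would count the elements of $\ftwo C_p$ satisfying the two stated conditions. The condition $\epsilon(\theta) \neq 0$ means $\epsilon(\theta) = 1$, and since $\epsilon$ is a surjective $\ftwo$-linear map with kernel of codimension $1$, exactly $2^{p-1}$ of the $2^p$ elements of $\ftwo C_p$ have augmentation $1$. Because $\eta$ has $p$ nonzero coefficients and $p$ is odd, $\epsilon(\eta) = 1$ in $\ftwo$, so $\eta$ lies among these; excluding it leaves exactly $2^{p-1} - 1$ elements satisfying both $\epsilon(\theta) \neq 0$ and $\theta \neq \eta$. Write $S$ for this set.

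The forward direction shows $(\ftwo C_p)^\times \subseteq S$, while $|S| = 2^{p-1} - 1$. The converse asserts the reverse containment $S \subseteq (\ftwo C_p)^\times$, that is, equality of the two sets; since one already contains the other, equality holds if and only if $|(\ftwo C_p)^\times| = 2^{p-1} - 1$. By Corollary \ref{cor:boundforunits} this cardinality is attained exactly when $p$ is $2$-rooted, which gives the biconditional. The only step that requires any care is confirming that imposing $\theta \neq \eta$ removes exactly one augmentation-one element, i.e.\ that $\epsilon(\eta) = 1$, which is where the oddness of $p$ enters; the rest is bookkeeping and a direct appeal to Corollary \ref{cor:boundforunits}.
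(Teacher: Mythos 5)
Your proposal is correct and follows essentially the same route as the paper: the forward direction is exactly the content of the remark preceding the corollary (units avoid the augmentation kernel and the zero divisor $\eta$), and the converse is settled by counting the $2^{p-1}-1$ elements satisfying both conditions and invoking the equality case of Corollary \ref{cor:boundforunits}. The one detail you flag --- that $\epsilon(\eta)=1$ because $p$ is odd --- is the same point the paper makes, so nothing is missing.
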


\begin{rem}
Note that these results will allow us to reformulate  a special case of Artin's Conjecture on primitive roots stated in the introduction.
For instance, it is natural to ask whether the equality in Corollary \ref{cor:boundforunits} or the converse of the first statement in Corollary \ref{augmentation}  will hold for infinitely many primes $p$.  These  hold if and only if $2$ is a primitive root for infinitely many primes $p$, which is a special case of Artin's Conjecture.
\end{rem}

In the next proposition, we analyze  the units of order $p$ in $\mathbb{F}_2C_p$ to obtain another characterization of $2$-rooted primes.
This characterization is the equivalence of statements (1) and (3) in Theorem \ref{main2}.

\begin{prop} \label{nontrivialunits} An odd prime $p$ is $2$-rooted precisely when the only  units of order $p$ in $\mathbb{F}_2C_p$ are   the non-identity elements of $C_{p}$.
\end{prop}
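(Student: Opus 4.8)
The plan is to use the explicit decomposition of $(\ftwo C_p)^\times$ as a product of cyclic groups from Lemma \ref{numberofunits}. By that lemma,
\[ (\ftwo C_p)^\times \cong \prod_{i=1}^{(p-1)/\ord_p(2)} C_{2^{\ord_p(2)}-1}. \]
An element has order dividing $p$ if and only if each of its coordinates in this product does. In the cyclic factor $C_{2^{\ord_p(2)}-1}$, the number of elements of order dividing $p$ is $\gcd(p,\, 2^{\ord_p(2)}-1)$. So first I would compute how many units of order dividing $p$ there are, and separately pin down which group elements of $C_p$ these are.

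First I would establish that the non-identity elements of $C_p$ are always units of order $p$: each generator $x^j$ ($1\le j\le p-1$) satisfies $(x^j)^p = x^{jp} = 1$ and has order exactly $p$ since $p$ is prime. This gives at least $p-1$ units of order exactly $p$, hence at least $p$ units of order dividing $p$ (counting the identity). So the subgroup of units of order dividing $p$ always contains $C_p$, and the proposition amounts to showing this containment is an equality precisely when $p$ is $2$-rooted. Next I would count the total number of units of order dividing $p$ using the product decomposition: it equals $\prod_i \gcd(p, 2^{\ord_p(2)}-1)$. Since $\ord_p(2)$ is by definition the least $t$ with $2^t\equiv 1\pmod p$, we have $p \mid 2^{\ord_p(2)}-1$, so $\gcd(p, 2^{\ord_p(2)}-1) = p$. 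Hence there are exactly $p^{(p-1)/\ord_p(2)}$ units whose order divides $p$.

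Comparing the two counts finishes the argument: the units of order dividing $p$ coincide with $C_p$ (which has $p$ elements) if and only if $p^{(p-1)/\ord_p(2)} = p$, i.e.\ if and only if $(p-1)/\ord_p(2) = 1$, which is exactly the condition $\ord_p(2) = p-1$, the definition of $p$ being $2$-rooted. Equivalently, the statement ``the only units of order $p$ are the non-identity elements of $C_p$'' is the statement that there are no further units of order dividing $p$ beyond $C_p$.

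The main obstacle — really the only delicate point — is making sure the counting is bookkept correctly: that I count units of order \emph{dividing} $p$ (the subgroup) rather than units of order \emph{exactly} $p$, and that I correctly identify $C_p$ as sitting diagonally inside the product of cyclic factors rather than as a single coordinate subgroup. I should verify that the embedding of the group elements $x^j$ into the product $\prod_i C_{2^{\ord_p(2)}-1}$ lands inside, and in fact fills up, the full $p$-torsion subgroup exactly when there is a single factor; the cleanest way to avoid errors is to argue purely by the order count $p^{(p-1)/\ord_p(2)}$ versus $|C_p| = p$, since any $p$ elements of order dividing $p$ that contain the $p$-element group $C_p$ must equal $C_p$.
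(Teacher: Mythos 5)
Your proposal is correct and follows essentially the same route as the paper: both use the decomposition of $(\ftwo C_p)^\times$ from Lemma \ref{numberofunits}, count the $p$-torsion (the paper counts the $p^{(p-1)/\ord_p(2)}-1$ units of order exactly $p$ via the unique elementary abelian $p$-subgroup, while you count the $p^{(p-1)/\ord_p(2)}$ units of order dividing $p$ --- a trivial bookkeeping difference), and compare this count with $|C_p|$ to force $\ord_p(2)=p-1$. No gaps; your closing remark about why a $p$-element torsion subgroup containing $C_p$ must equal $C_p$ is exactly the right care to take.
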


\begin{proof}
Recall the structure of units in  $\mathbb{F}_2C_p$:
\[ (\ftwo C_p)^\times \cong  \prod_{i=1}^{(p-1)/\ord_p(2)} C_{2^{\ord_p (2)} -1}. \]
Since $p$ divides $2^{\ord_p(2)} - 1$, there is a unique copy of $C_p$ inside $ C_{2^{\ord_p (2)} -1}$, and thus a unique  elementary abelian $p$-group of rank $(p-1)/\ord_p(2)$ inside $(\ftwo C_p)^\times$. Therefore the number of units of order $p$ in $ \ftwo C_p$ is equal to $p^{(p-1)/\ord_p(2)} - 1$.  The only  units of order $p$ in $\mathbb{F}_2C_p$ are   the non-identity elements of $C_{p}$ if and only if
\[p^{(p-1)/\ord_p(2)} - 1 =  p  -1.\]
This equality holds if and only if $p - 1 =\ord_p(2)$, or equivalently when $2$ is  a primitive root mod $p$.
\end{proof}

What we have seen here is a striking contrast between Mersenne primes and $2$-rooted primes.  For the former primes, every non-trivial unit of $\ftwo C_p$ is of order $p$; for the latter primes, only the non-identity elements of $C_p$ will have order $p$.
This suggests that these two sets of primes are disjoint.  More precisely, the following is true.

\begin{prop} \label{3}
The prime $3$ is the only prime which is both Mersenne and $2$-rooted.
\end{prop}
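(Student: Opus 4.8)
The plan is to show that $p=3$ is both Mersenne and $2$-rooted directly, and then prove that no other prime can be both. For the first part, one checks that $3 = 2^2 - 1$ is Mersenne, and that $2$ is a primitive root modulo $3$ since $2^1 \equiv 2$ and $2^2 \equiv 1 \pmod 3$, so $\ord_3(2) = 2 = 3-1$. The harder direction is to rule out all primes $p > 3$, and here the strategy is to exploit the tension already flagged in the paper: being Mersenne and being $2$-rooted impose incompatible conditions on $\ord_p(2)$.

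The cleanest route is to compute $\ord_p(2)$ under the Mersenne hypothesis. Suppose $p = 2^n - 1$ is a Mersenne prime with $p > 3$, so that $n$ is prime and $n > 2$. Then $2^n \equiv 1 \pmod p$ by the very definition of $p$, so $\ord_p(2)$ divides $n$. Since $n$ is prime, $\ord_p(2)$ is either $1$ or $n$; it cannot be $1$ because $2 \not\equiv 1 \pmod p$ for $p > 1$. Hence $\ord_p(2) = n$. On the other hand, for $p$ to be $2$-rooted we need $\ord_p(2) = p - 1 = 2^n - 2$. Equating these forces $n = 2^n - 2$, i.e. $2^n = n + 2$. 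This equation holds only for small $n$: for $n = 2$ it gives $4 = 4$ (yielding $p = 3$), while for $n \geq 3$ the left side $2^n$ grows strictly faster than $n + 2$, so there is no solution. Therefore no Mersenne prime $p > 3$ can be $2$-rooted.

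I would present this as the main computation, preceded by the trivial verification for $p = 3$. The only step requiring a moment of care is the inequality $2^n > n + 2$ for $n \geq 3$, which is an elementary induction (base case $n = 3$ gives $8 > 5$, and the inductive step doubles the left side while adding only $1$ to the right). I do not anticipate a genuine obstacle here; the argument is short and self-contained, relying only on the definition of a Mersenne prime, the elementary theory of the multiplicative order, and the characterization $\ord_p(2) = p-1$ of $2$-rootedness established at the start of Section \ref{primitiveroots}. The contrast highlighted just before the proposition—that Mersenne primes make \emph{every} non-trivial unit of $\ftwo C_p$ have order $p$ while $2$-rooted primes restrict order-$p$ units to the group elements—is exactly what the order computation formalizes, since both properties are governed by $\ord_p(2)$.
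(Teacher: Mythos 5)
Your proof is correct, but it takes a genuinely different route from the paper. The paper stays inside its unit-group framework: by Lemma \ref{numberofunits}, $2$-rootedness makes $(\ftwo C_p)^\times$ cyclic of order $2^{p-1}-1$, while the Mersenne hypothesis makes $\ftwo C_p$ a $\Delta_p$-ring (Lemma \ref{delta-examples}), so every unit has order dividing $p$; a cyclic group whose exponent divides $p$ has order $1$ or $p$, forcing $2^{p-1}-1 = p$ and hence $p = 3$. You instead work purely with multiplicative orders: writing a Mersenne prime as $p = 2^n-1$ with $n$ prime, you get $\ord_p(2) \mid n$ and $\ord_p(2) \neq 1$, so $\ord_p(2) = n$, while $2$-rootedness demands $\ord_p(2) = p-1 = 2^n-2$; the equation $2^n = n+2$ then has no solution for $n \geq 3$. (In fact you do not even need the full strength of the primality of $n$: $\ord_p(2) \leq n < 2^n - 2$ for $n \geq 3$ already finishes the argument.) Your version is more elementary and self-contained --- it uses nothing beyond the definition of a Mersenne prime and basic order theory, whereas the paper's proof leans on the group-algebra machinery of Sections \ref{section-kg} and \ref{units}. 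What the paper's route buys in exchange is precisely the point it advertises in the remark following the proposition: both families of primes are tied to one common object, the unit group of $\ftwo C_p$, so the disjointness statement becomes a corollary of that unified picture. Your route instead exposes the quantitative heart of the matter: for a Mersenne prime, $\ord_p(2)$ is logarithmic in $p$, which is as far as possible from the linear value $p-1$ that $2$-rootedness requires. Both arguments avoid quadratic reciprocity, which is the comparison the paper cares about; yours is arguably the most elementary of the three.
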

\begin{proof} Let $p$ be a 2-rooted prime. Then $\ord_p(2) = p -1$ and by Lemma \ref{numberofunits}, $$(\FF_2C_p)^\times \cong C_{2^{p-1} - 1}.$$  If $p$ is also Mersenne, then $\FF_2 C_p$ is a $\Delta_p$-ring and every unit has order $p$. Since the group of units is cyclic, this forces $2^{p-1} - 1 = p$, and the only prime satisfying this equation is $p = 3$.\end{proof}

\begin{rem}
The above result is often obtained as an easy consequence of the quadratic reciprocity law. Our proof is different and it is relatively more elementary.  A noteworthy feature of our approach  is that we connect both sets of primes in question (Mersenne and $2$-rooted primes) to a common concept, namely the structure of units in $\ftwo C_{p}$.
\end{rem}

\section{The Element $1+x+x^2$} \label{1+x+x^2}

Recall that when $p$ is a Mersenne prime, every unit in 
\begin{equation}\label{short-fcp-iso}
\ftwo C_p \cong \FF_2[x]/(x^p - 1) \cong \FF_2 \times \FF_2[x]/(\Phi_p(x))
\end{equation}
has order $p$. So it is natural to ask for some explicit non-trivial examples. (The non-identity elements of the group $C_p$ are of course trivial examples of units which have order $p$.) To begin, consider the element $x^n + 1 \in \FF_2[x]/(\Phi_p(x))$ when $(n, p) = 1$.

\begin{lemma}
Let $p$ be an odd prime and let $n$ be a positive integer relatively prime to $p$. The element $x^n + 1$ is a unit in $\FF_2[x]/(\Phi_p(x))$. This unit has order $p$ if and only if $p$ is a Mersenne prime.
\end{lemma}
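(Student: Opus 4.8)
The plan is to work inside the product-of-fields decomposition
\[
\FF_2[x]/(\Phi_p(x)) \cong \prod_{i=1}^{(p-1)/\ord_p(2)} \FF_{2^{\,\ord_p(2)}}
\]
established in Section \ref{units}, writing $d = \ord_p(2)$ and tracking the image of $x$, which in each factor is a primitive $p$-th root of unity. First I would dispose of unitality. Since $x^n+1 = x^n-1$ in characteristic $2$ and $\Phi_p(x)\mid x^p-1$, the divisor $\gcd(x^n-1,\Phi_p(x))$ divides $\gcd(x^n-1,x^p-1) = x^{\gcd(n,p)}-1 = x-1$, which is coprime to $\Phi_p(x)$ because $\Phi_p(1) = p \equiv 1 \pmod 2$. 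Hence $\gcd(x^n+1,\Phi_p(x)) = 1$ and $x^n+1$ is a unit; equivalently, no primitive $p$-th root $\zeta$ satisfies $\zeta^n = 1$, so $\zeta^n+1\neq 0$ in every factor.

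Next I would reduce to the case $n=1$. Because $\gcd(n,p)=1$, the assignment $x\mapsto x^n$ is a well-defined ring automorphism of $\FF_2[x]/(\Phi_p(x))$ (the squarefree polynomial $\Phi_p(x)$ divides $\Phi_p(x^n)$, as every primitive $p$-th root is a root of the latter), and it carries $x+1$ to $x^n+1$. Since automorphisms preserve order, the order of $x^n+1$ equals the order of $x+1$. One can see the same thing concretely: the order of $x^n+1$ is the least common multiple of the orders of $\zeta^n+1$ over all primitive $p$-th roots $\zeta$, and reindexing by $\beta = \zeta^n$ (a bijection of the primitive $p$-th roots) shows this is independent of $n$.

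The crux is the order computation for $x+1$. Every $\beta+1$ is nonzero and can never equal $1$ (as $\beta\neq 0$), so its order is $p$ precisely when $(\beta+1)^p = 1$, i.e.\ when $\beta+1$ lies in the unique subgroup $\mu_p$ of order $p$ (the group of all $p$-th roots of unity). Consequently the order of $x+1$ equals $p$ if and only if $\beta+1\in\mu_p$ for \emph{every} primitive $p$-th root $\beta$. The Mersenne direction is then immediate from Section \ref{section-kg}: if $p = 2^q-1$ is Mersenne then $\ord_p(2)=q$, so $2^d-1 = p$ and $\FF_{2^d}^\times = \mu_p$ already contains every $\beta+1$. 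For the converse, assuming $\beta+1\in\mu_p$ for all primitive roots $\beta$, I would set $S = \mu_p\cup\{0\}$ and prove $S$ is a subfield of $\FF_{2^d}$: it is closed under multiplication and inversion since $\mu_p$ is a group, contains $0$ and $1$, is closed under additive inverses in characteristic $2$, and is closed under addition because for nonzero $\gamma,\delta\in S$ one has $\gamma+\delta = \gamma\,(1+\delta\gamma^{-1})$ with $\delta\gamma^{-1}\in\mu_p$, and the hypothesis (together with $1+1=0$) gives $1+\mu\in S$ for every $\mu\in\mu_p$. Then $|S| = p+1$ is a power of $2$, so $p$ is Mersenne, and Lemma \ref{delta-fields} is consistent with this throughout.

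I expect the main obstacle to be exactly this converse step: recognizing that the innocuous-looking hypothesis ``$\beta+1$ is always a $p$-th root of unity'' is precisely additive closure of $\mu_p\cup\{0\}$, which upgrades this multiplicative set to a subfield and forces $p+1$ to be a power of $2$. The remaining ingredients—unitality via the $\gcd$, the reduction to $n=1$ via the automorphism $x\mapsto x^n$, and the Mersenne direction via $2^{\ord_p(2)}-1 = p$—are routine given the decomposition from Section \ref{units}.
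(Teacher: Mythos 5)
Your proof is correct, but it follows a genuinely different route from the paper's. The paper never leaves $\FF_2[x]$: it observes that $x^n+1$ has order $p$ if and only if $x^p-1=(x+1)\Phi_p(x)$ divides $(x+1)\left[(x^n+1)^p-1\right]$, expands both sides binomially, reduces exponents modulo $p$, and concludes that the coefficients $\binom{p}{0},\dots,\binom{p}{p-1}$ must all be congruent modulo $2$, hence all odd; it then quotes the classical fact (\cite[Theorem 8.14]{Kosher}) that this characterizes Mersenne primes. That computation treats both directions and all $n$ uniformly, and it is exactly what feeds the binomial-coefficient statement (7) of Theorem \ref{main}. You instead pass to the splitting field: after the gcd argument for unitality (essentially identical to the paper's), you reduce to $n=1$ via the substitution $x\mapsto x^n$ --- a step the paper does not need --- and your crux is field-theoretic: the order of $x+1$ equals $p$ precisely when $1+\beta\in\mu_p$ for every primitive $p$th root of unity $\beta$, and since every element of $\mu_p\setminus\{1\}$ is primitive ($p$ being prime), this hypothesis says exactly that $S=\mu_p\cup\{0\}$ is additively closed, hence a subfield of $\FF_{2^{\ord_p(2)}}$ of order $p+1$, forcing $p+1$ to be a power of $2$. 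This subfield trick is an attractive, self-contained substitute for the citation to \cite{Kosher} and explains conceptually why Mersenne primes appear; what it gives up is the explicit binomial identity that the paper exploits throughout Section \ref{1+x+x^2}. Two details worth making explicit in a final write-up: the map $x\mapsto x^n$ is well defined because $\Phi_p(x)$ is squarefree and divides $\Phi_p(x^n)$, and it is invertible with inverse $x\mapsto x^m$ where $nm\equiv 1 \bmod p$ (using $x^p=1$ in the quotient); both are one-line checks, and your alternative reindexing argument over the primitive roots sidesteps them entirely.
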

\begin{proof}
Let $p$ be an odd prime and let $(n, p) = 1$. If $\alpha$ is a common root of the polynomials $x^n - 1$ and $x^p - 1$ in the algebraic closure of $\FF_2$, then $\alpha^p = 1 = \alpha^n$.  This forces the multiplicative order of $\alpha$ to be 1 since $(n, p) = 1$, so $\alpha = 1$ is the only common root. Since $x^p - 1 = (x -1)\Phi_p(x)$, we obtain that $x^n - 1 = x^n + 1$ is relatively prime to $\Phi_p(x)$ in $\FF_2[x]$. Thus, $x^n + 1$ is a unit in $\FF_2[x]/(\Phi_p(x))$.

The element $x^n + 1$ has order $p$ in $\FF_2[x]/(\Phi_p(x))$ if and only if $(x+1)\Phi_p(x) = x^p - 1$ divides $$(x+1)[(x^n+1)^p - 1] = \sum_{i=1}^p \binom{p}{i} x^{ni} + \sum_{i=1}^p \binom{p}{i} x^{ni+1}.$$ Working modulo $x^p - 1$, we may reduce powers of $x$ modulo $p$ and the resulting coefficients of $1, x, \dots, x^{p-1}$ must all be zero modulo 2. We therefore obtain, for each degree $0 \leq k \leq p-1$, $$ \binom{p}{n^{-1}k \mod p} \equiv \binom{p}{n^{-1}k - n^{-1} \mod p}\mod 2.$$ Taken together, these congruences imply that the binomial coefficients $\binom{p}{0}, \dots, \binom{p}{p-1}$ are mutually congruent modulo 2 and therefore all congruent to $\binom{p}{0} = 1$. This last condition is equivalent to $p$ being a Mersenne prime (see \cite[Theorem 8.14]{Kosher}).
\end{proof} 
The element $x^n + 1$ in the lemma above lifts to the unit $1 + x^n + \Phi_p(x)$ in $\FF_2 C_p$ (use the isomorphism in equation (\ref{short-fcp-iso}) at the beginning of this section to see this), providing an example of a single unit whose order is $p$ if and only if $p$ is a Mersenne prime. In the remainder of this section, we examine another such example, $1 + x + x^2$. It is noteworthy that it is an irreducible polynomial whose degree does not depend on $p$.

\begin{thm}  \label{15}  Let $p > 3$ be a prime and let $x$ be a  generator of the cyclic group $C_p$. Then,
$(1 +  x + x^2)^p = 1$ in $\mathbb{F}_2C_p$  if and only if  $p$ is a Mersenne prime.
\end{thm}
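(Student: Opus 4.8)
\section*{Proof plan for Theorem \ref{15}}

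The plan is to treat the two directions separately, using the factorization $(1+x)(1+x+x^2)=1+x^3$ in $\ftwo[x]$ as the main engine. First I would record that $1+x+x^2$ is a unit in $\ftwo C_p\cong\ftwo[x]/(x^p-1)$: a common root of $1+x+x^2$ and $x^p-1$ in $\overline{\ftwo}$ would be simultaneously a primitive cube root of unity and a $p$-th root of unity, hence trivial since $\gcd(3,p)=1$ for $p>3$; but $1+1+1=1\neq 0$, so $1+x+x^2$ and $x^p-1$ are coprime in $\ftwo[x]$ and $1+x+x^2\in(\ftwo C_p)^\times$. For the ``if'' direction, suppose $p$ is Mersenne. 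By Theorem \ref{sec-p-kg} the ring $\ftwo C_p$ is then a $\Delta_p$-ring, so every unit satisfies $u^p=1$; applying this to the unit $1+x+x^2$ gives $(1+x+x^2)^p=1$ at once.

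For the ``only if'' direction I would pass through binomial coefficients, recovering statement 7 of Theorem \ref{main} along the way. Raising the identity $(1+x)(1+x+x^2)=1+x^3$ to the $p$-th power yields $(1+x)^p(1+x+x^2)^p=(1+x^3)^p$ in $\ftwo C_p$, and under the hypothesis $(1+x+x^2)^p=1$ this collapses to $(1+x)^p=(1+x^3)^p$. Expanding the two sides as $\sum_i\binom{p}{i}x^i$ and $\sum_i\binom{p}{i}x^{3i}$, reducing exponents modulo $p$ (here $i\mapsto 3i$ permutes $\ints/p$ because $3$ is invertible mod $p$), and comparing the coefficient of each $x^k$, I obtain the congruences $\binom{p}{r}\equiv\binom{p}{3r\bmod p}\pmod 2$ for all $1\le r\le p-1$. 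It then remains to show that these congruences force every $\binom{p}{r}$ to be odd, for then $p$ is Mersenne by \cite[Theorem 8.14]{Kosher}.

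The hard part will be exactly this last implication. The congruences say that the parity function $r\mapsto\binom{p}{r}\bmod 2$ is constant on the orbits of multiplication by $3$ on $(\ints/p)^\times$; together with the symmetry $\binom{p}{r}=\binom{p}{p-r}$ it is constant on the orbits of the subgroup $H=\langle 3,-1\rangle\le(\ints/p)^\times$, and since $\binom{p}{1}=p$ is odd, every $r\in H$ gives an odd binomial coefficient. Two easy observations dispose of the favourable cases: the hypothesis forces $p\equiv 3\bmod 4$ (otherwise bit $1$ of $p$ is $0$, so $\binom{p}{3}$ is even while $\binom{p}{1}=\binom{p}{3\cdot 1}$ is odd, contradicting the $r=1$ congruence), and when $H=(\ints/p)^\times$ — for instance whenever $3$ is a primitive root mod $p$ — the orbit of $1$ is everything and all binomials are odd.

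The genuinely delicate case is $p\equiv 3\bmod 4$ with $H$ a proper subgroup. There I would invoke Lucas' theorem, which identifies $\{\,r:\binom{p}{r}\text{ odd}\,\}$ with the set of binary submasks of $p$, and argue that a proper nonempty union of $\langle 3\rangle$-orbits cannot coincide with such a downward-closed submask set unless $p=2^k-1$; equivalently, that if $p$ is not Mersenne then some power of $3$ modulo $p$ must carry a binary digit outside the support of $p$, violating the closure of the submask set under multiplication by $3$. Reconciling the multiplicative orbit structure of $\langle 3\rangle$ with the additive (bitwise) structure of the submasks of $p$ is the crux of the argument, and I expect it to be the only step requiring real work.
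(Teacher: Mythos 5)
Your setup is sound and follows the paper's own route: the unit-ness of $1+x+x^2$ (your root-based coprimality argument is a fine substitute for the paper's degree count via Lemma \ref{cyclotomic}), the ``if'' direction via the $\Delta_p$-ring property from Theorem \ref{sec-p-kg}, and the reduction of the ``only if'' direction to the congruences $\binom{p}{r}\equiv\binom{p}{3r\bmod p}\pmod 2$ are all correct and essentially identical to Proposition \ref{56} in the paper. But the proposal has a genuine gap exactly where you flag it: the implication ``these congruences force $p$ to be Mersenne'' is never proved. Your orbit-versus-submask reformulation is an accurate restatement of the problem (by Lucas' theorem the indices $r$ with $\binom{p}{r}$ odd are the binary submasks of $p$, and the congruences say this set is a union of $\langle 3\rangle$-orbits), and your two observations correctly dispose of $p\equiv 1\bmod 4$ and of the case $\langle 3,-1\rangle=(\ints/p\ints)^\times$, but the remaining case \emph{is} the theorem, and ``some power of $3$ modulo $p$ must carry a digit outside the support of $p$'' is a restatement of what must be shown, not an argument for it.

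The paper closes this gap (Proposition \ref{71}) not by analyzing the full orbit structure but by an induction that uses the congruence only at carefully chosen indices. One proves $\binom{p}{2^s}$ is odd for every $s$ with $2^s\le p$, which suffices by Proposition \ref{2^i}. By the induction hypothesis all binary digits of $p$ below position $s$ equal $1$, so every $k<2^s$ is a submask of $p$ and $\binom{p}{k}$ is odd by Lucas. Since $3\nmid 2^s$, one of $2^s+1$ or $2^s+2$ equals $3k$ with $k<2^s$; the hypothesis then makes $\binom{p}{2^s+1}$ or $\binom{p}{2^s+2}$ odd, and two elementary parity identities (Lemma \ref{2facts}: $\binom{p}{2k}\equiv\binom{p}{2k+1}\pmod 2$, and $\binom{p}{4k+2}$ odd implies $\binom{p}{4k}$ odd) transfer the oddness to $\binom{p}{2^s}$ itself. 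Some mechanism of this concrete, digit-by-digit kind is what your ``crux'' step needs; the multiplicative-orbit picture alone does not obviously supply it, so as written the proposal establishes only one direction of the theorem.
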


\noindent Though there are other units in $\ftwo C_p$ which will have order $p$ precisely when $p$ is Mersenne,  the unit $1 + x + x^2$ is the `smallest' example with this property, because $1 + x$ is not a unit and $x$ has order $p$ for {\em any} prime. This result  is exactly the equivalence of statements (1) and (5) of Theorem \ref{main}.  In this section, we will prove  this equivalence by showing that
\[ (1) \implies (5) \implies (6) \implies (7) \implies (1)\]
in Theorem \ref{main}. 

Since every unit in $\ftwo C_p$ will have order $p$ when $p$ is Mersenne, to establish the `if' part  of Theorem \ref{15}, it is enough to show that $1 + x + x^2$ is a unit in $\FF_2[x]/(x^p - 1)$ when $p > 3$. This is indeed the case: when $p > 3$, $\ord_p(2) > 2$, so the degree of every irreducible factor of $\Phi_p(x)$ is greater than 2 by Lemma \ref{cyclotomic}. Hence, $1 + x + x^2$ and $\Phi_p(x)$ are relatively prime, and $1 + x + x^2$ is a unit in $\FF_2 C_p$.

In remainder of this section, we will prove the converse. That is, we will show that  if $p > 3$ is a prime and $(1+ x + x^2)^p = 1$ in $\ftwo C_p$, then $p$ is Mersenne. We will do this by showing $ (5) \implies (6) \implies (7) \implies (1)$ in Theorem \ref{main}.

To this end, we need a formula of Lucas which gives an efficient algorithm for computing the binomial coefficients mod $2$, and a characterization of Mersenne primes in terms of binomial coefficients.

\begin{thm}[Lucas] Let $m$ and $n$ be positive integers.  Then the binomial coefficients mod $2$ can be computed using the formula:
\[ {m \choose n} = \prod_{i=0}^k { m_i \choose n_i} \mod 2\]
where
\[  m = \sum_{i=0}^k m_i 2^i \ \ \ \text{and  } \ \ \  n = \sum_{i=0}^k n_i 2^i \]
are the expansions of the integers $m$ and $n$ respectively  in base $2$.\label{Lucas}
\end{thm}

The next proposition can be easily deduced from Lucas' theorem.

\begin{prop} \cite{Kosher} \label{2^i} Let $p$ be an odd prime. Then $p$ is Mersenne if and only if ${p\choose 2^m}=1\mod 2$ for all $m$ such that $0 \le 2^m \le p$.
\end{prop}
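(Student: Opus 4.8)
The plan is to read off $\binom{p}{2^m} \bmod 2$ directly from the binary expansion of $p$ using Theorem~\ref{Lucas}, and then observe that the hypothesis forces every binary digit of $p$ to equal $1$. Write the base-$2$ expansion of $p$ as $p = \sum_{i=0}^{k} \epsilon_i 2^i$ with each $\epsilon_i \in \{0,1\}$ and leading digit $\epsilon_k = 1$ (so that $2^k \le p < 2^{k+1}$). The integer $2^m$ has base-$2$ expansion consisting of a single $1$ in position $m$ and zeros elsewhere, so Lucas' formula gives
\[ \binom{p}{2^m} \equiv \binom{\epsilon_m}{1}\prod_{i \ne m}\binom{\epsilon_i}{0} = \epsilon_m \pmod 2 \]
for every $m$ in the range $0 \le m \le k$. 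In words, the parity of $\binom{p}{2^m}$ is exactly the $m$-th binary digit of $p$.

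Next I would match the index range in the statement with the digit positions. Since $\epsilon_k = 1$ we have $2^k \le p$, hence $2^m \le p$ holds precisely for $0 \le m \le k$ (because $2^{k+1} > p$), and these are exactly the positions of the potentially nonzero digits of $p$. Thus the hypothesis ``$\binom{p}{2^m} \equiv 1 \pmod 2$ for all $m$ with $0 \le 2^m \le p$'' is equivalent, via the computation above, to $\epsilon_0 = \epsilon_1 = \cdots = \epsilon_k = 1$. This in turn says $p = \sum_{i=0}^{k} 2^i = 2^{k+1} - 1$, i.e.\ $p$ is a Mersenne number; since $p$ is prime, it is a Mersenne prime. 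The converse is immediate: if $p = 2^{k+1} - 1$ then every digit $\epsilon_i$ (for $0 \le i \le k$) equals $1$, so $\binom{p}{2^m} \equiv \epsilon_m = 1$ for all $m$ with $2^m \le p$.

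There is no substantial obstacle here; the argument is a direct unwinding of Lucas' theorem. The only point requiring care is the bookkeeping in the previous paragraph---confirming that the inequality $2^m \le p$ ranges over exactly the digit positions $0, \dots, k$ of $p$, neither omitting the leading position $k$ nor including a spurious position $k+1$. Alternatively, one could deduce the proposition from the characterization that $p$ is Mersenne if and only if all of $\binom{p}{0}, \dots, \binom{p}{p}$ are odd \cite[Theorem 8.14]{Kosher}, since $\binom{p}{r}$ is odd for every $r$ exactly when it is odd for every $r$ of the special form $2^m$; but the direct computation above is cleaner and self-contained.
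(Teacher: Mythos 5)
Your proof is correct and matches the paper's approach: the paper likewise deduces the proposition from Lucas' theorem (Theorem~\ref{Lucas}) by reading the parity of $\binom{p}{2^m}$ off the $m$-th binary digit of $p$ and concluding that all digits equal $1$ exactly when $p$ is Mersenne. Your explicit bookkeeping that $2^m \le p$ ranges over precisely the digit positions $0,\dots,k$ is a welcome touch of care, but the argument is the same.
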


The next proposition shows $ (5) \implies (6) \implies (7) $ in Theorem \ref{main}.

\begin{prop} \label{56} Let $p>3$ be an odd prime and  let $x$ be a generator for the cyclic group $C_p$. If $(1+x+x^2)^p=1$ in $\ftwo C_p$, then
\[ {p\choose j}\equiv{p\choose 3j \mod p }\mod 2 \ \  \text{for }  0 \le j \le p. \]
\end{prop}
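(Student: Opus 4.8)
The plan is to establish the two implications $(5)\implies(6)$ and $(6)\implies(7)$ of Theorem \ref{main} in succession; the first is a one-line algebraic manipulation, and the second comes down to comparing coefficients after reducing exponents modulo $p$.

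For $(5)\implies(6)$, I would start from the identity
\[(1+x)(1+x+x^2) = 1 + x^3\]
in $\ftwo C_p$, where the repeated $x$ and $x^2$ terms cancel because the characteristic is $2$. Raising both sides to the $p$th power gives $(1+x)^p(1+x+x^2)^p = (1+x^3)^p$, and the hypothesis $(1+x+x^2)^p = 1$ then collapses the left-hand side to $(1+x)^p$, yielding $(1+x)^p = (1+x^3)^p$, which is statement $(6)$. No invertibility of $1+x$ is needed; one simply multiplies out the identity.

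For $(6)\implies(7)$, I would expand both sides with the binomial theorem in $\ftwo[x]$ and then reduce modulo $x^p-1$, so that all exponents are read modulo $p$. On the left, $(1+x)^p=\sum_{j=0}^p \binom{p}{j}x^j$: the terms $j=0$ and $j=p$ both collapse onto $x^0$ and contribute $\binom{p}{0}+\binom{p}{p}=0$, while for $1\le k\le p-1$ the coefficient of $x^k$ is exactly $\binom{p}{k}$. On the right, $(1+x^3)^p=\sum_{j=0}^p \binom{p}{j}x^{3j}$: since $p>3$ forces $\gcd(3,p)=1$, the map $j\mapsto 3j \mod p$ is a bijection on residues, the constant term again vanishes, and for $1\le k\le p-1$ the coefficient of $x^k$ is $\binom{p}{3^{-1}k \mod p}$. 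Equating coefficients of $x^k$ for each $1\le k\le p-1$ gives $\binom{p}{k}\equiv\binom{p}{3^{-1}k \mod p}\mod 2$, and the substitution $k=3j \mod p$ rewrites this as the desired $\binom{p}{j}\equiv\binom{p}{3j \mod p}\mod 2$. The endpoints $j=0$ and $j=p$ hold trivially, since $\binom{p}{0}=\binom{p}{p}=1$ and $3j\equiv 0$ in both cases.

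The only real obstacle is the bookkeeping in the second implication: one must track the reduction of exponents modulo $p$ with care, in particular noting the harmless collision of the $j=0$ and $j=p$ terms at $x^0$ (they sum to $0$ modulo $2$), and then carry out the reindexing $k=3j \mod p$ cleanly using the invertibility of $3$ modulo $p$. Everything else is routine arithmetic in $\ftwo C_p$.
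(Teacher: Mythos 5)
Your proposal is correct and follows essentially the same route as the paper's proof: the identity $(1+x)(1+x+x^2)=1+x^3$, raising to the $p$th power, applying the hypothesis to get $(1+x)^p=(1+x^3)^p$, and then comparing binomial coefficients in $\ftwo C_p$. You are in fact somewhat more careful than the paper in the final step, spelling out the reduction of exponents modulo $x^p-1$, the harmless collision of the $j=0$ and $j=p$ terms, and the reindexing via the invertibility of $3$ modulo $p$, all of which the paper compresses into ``equate the coefficients of like powers of $x$.''
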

\begin{proof} Since $(1+x+x^2)(1+x)=1+x^3$ in  $\ftwo C_p$, raising to the $p$th powers on both sides, we get  $$(1+x+x^2)^p(1+x)^p=(1+x^3)^p.$$
Since  $(1+x+x^2)^p=1$,  we have
$$(1+x)^p=(1+x^3)^p.$$
Expanding both these expression using the binomial series, we get
$$\sum_{i=0}^{p}{p\choose i}x^i=\sum_{j=0}^{p}{p\choose j}x^{3j}.$$
This  last equation holds in the group algebra $\ftwo C_p$ where we can equate the coefficients of like powers of $x$.  This gives the desired result:
${p\choose j}\equiv{p\choose 3j\mod p}\mod 2$ for all $0\leq j\leq p$.
\end{proof}

We will now show that  the condition
\[ {p\choose j}\equiv{p\choose 3j \mod p }\mod 2 \ \  \text{for }  0 \le j \le p. \]
implies that $p$ is Mersenne.  To this end, we need the following lemma.

\begin{lemma} \label{2facts} Let $p$ be an odd prime and let $k$ be a nonnegative integer. Then we have the following.
\begin{enumerate}
\item ${p\choose 2k}\equiv {p\choose 2k+1}\mod 2$  for all $k$.
\item  If ${p\choose 4k+2}\equiv 1\mod 2$ then ${p\choose 4k}\equiv 1\mod 2$.
\end{enumerate}
\end{lemma}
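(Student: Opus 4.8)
The plan is to deduce both statements directly from Lucas' theorem (Theorem~\ref{Lucas}) by comparing the base-$2$ expansions of the relevant indices. Throughout, write $p = \sum_{i \ge 0} p_i 2^i$ for the binary expansion of $p$; since $p$ is odd, $p_0 = 1$. I will use repeatedly that, by Lucas, $\binom{p}{n} \equiv \prod_i \binom{p_i}{n_i} \pmod 2$, and that each factor $\binom{p_i}{n_i}$ equals $1$ unless $n_i = 1$ and $p_i = 0$ (in which case it is $0$). Thus $\binom{p}{n} \equiv 1 \pmod 2$ precisely when $n_i \le p_i$ for every $i$, and more generally the parity of $\binom{p}{n}$ is controlled position-by-position by the binary digits of $n$.

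For part (1), the key observation is that $2k$ and $2k+1$ have identical binary digits in every position $i \ge 1$ and differ only in position $0$, where $(2k)_0 = 0$ and $(2k+1)_0 = 1$; adding $1$ to the even number $2k$ produces no carry. Applying Lucas and isolating the $i = 0$ factor, the two products differ only in the factor $\binom{p_0}{0}$ versus $\binom{p_0}{1}$. Since $p_0 = 1$, both of these equal $1$, so the remaining products coincide and the two binomial coefficients are congruent mod $2$.

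For part (2), I would compare $4k$ and $4k+2$. Because $4 \mid 4k$, the digits $(4k)_0$ and $(4k)_1$ both vanish, so adding $2$ again produces no carry: $4k$ and $4k+2$ agree in every position $i \ne 1$, while $(4k)_1 = 0$ and $(4k+2)_1 = 1$. Now assume $\binom{p}{4k+2} \equiv 1 \pmod 2$, which by the criterion above means $(4k+2)_i \le p_i$ for all $i$. For $i \ne 1$ we get $(4k)_i = (4k+2)_i \le p_i$, and for $i = 1$ we have $(4k)_1 = 0 \le p_1$ trivially; hence $(4k)_i \le p_i$ for all $i$, giving $\binom{p}{4k} \equiv 1 \pmod 2$, as desired.

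The only real care needed---the main (mild) obstacle---is the carry bookkeeping: one must verify that the low-order bits being zero ($2k$ even in part (1), $4k$ divisible by $4$ in part (2)) guarantees that incrementing by $1$ respectively $2$ leaves all other binary digits untouched. Once this is confirmed, both statements follow immediately from the digitwise description of $\binom{p}{n} \bmod 2$ provided by Lucas' theorem.
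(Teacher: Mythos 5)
Your proof is correct, but it takes a genuinely different route from the paper. You deduce both parts from Lucas' theorem (Theorem \ref{Lucas}) via the digitwise criterion that $\binom{p}{n}$ is odd precisely when every binary digit of $n$ is at most the corresponding digit of $p$, together with the (correct) observation that adding $1$ to the even number $2k$, or $2$ to the multiple of four $4k$, produces no carry. The paper instead proves the lemma by purely algebraic identities: for part (1) it uses $(2k+1)\binom{p}{2k+1} = \binom{p}{2k}(p-2k)$ and notes both extra factors are odd; for part (2) it uses $(4k+1)(2k+1)\binom{p}{4k+2} = \binom{p}{4k}\,\frac{(p-4k)(p-4k-1)}{2}$ and reads off the contrapositive. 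The paper's argument is self-contained and needs no machinery beyond standard binomial identities, whereas yours leans on Lucas --- but since Lucas is already stated in the paper and used in Proposition \ref{71} (also via base-$2$ expansions), your proof arguably makes the section more uniform. Your digit criterion also makes transparent \emph{why} part (2) is only a one-way implication (the digit of $4k$ in position $1$ is $0 \le p_1$ automatically, while the converse would require $p_1 = 1$), and it handles the degenerate cases where the lower index exceeds $p$ without any separate discussion.
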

\begin{proof} One can easily verify that  $(2k+1){p\choose 2k+1}={p\choose 2k}(p-2k)$.  Now, since $p-2k$ and  $2k+1$ are both odd, it follows that
${p\choose 2k+1}\equiv {p\choose 2k}\mod 2$.

For  the second statement, we first verify that $$(4k+1)(2k+1){p\choose 4k+2}={p\choose 4k}\frac{(p-4k)(p-4k-1)}{2}.$$ Now we note that   $(4k+1)(2k+1)$ is odd and $(p-4k)(p-4k-1)$ is even. So we have
${p\choose 4k+2}\equiv {p\choose 4k}\frac{(p-4k)(p-4k-1)}{2}\mod 2$.
 It is now clear that if ${p\choose 4k}$ is even, then so is ${p\choose 4k+2}$.
\end{proof}

\begin{prop} \label{71} Let $p > 3$ be a prime such that ${p\choose j}\equiv{p\choose {3j\mod p}}\mod2$ for all $1  \le j \le  p-1$. Then $p$ is Mersenne.
\end{prop}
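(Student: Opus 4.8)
The plan is to argue by contradiction, converting everything into statements about the binary digits of $p$ via Lucas' theorem (Theorem \ref{Lucas}). Write $p = \sum_i \beta_i 2^i$ in base $2$. By Lucas, for any $r$ with base-$2$ digits $r_i$ we have $\binom{p}{r} \equiv \prod_i \binom{\beta_i}{r_i} \pmod 2$, so $\binom{p}{r}$ is odd exactly when every $1$-digit of $r$ sits in a position where $\beta_i = 1$. In particular $\binom{p}{2^m} \equiv \beta_m \pmod 2$, so by Proposition \ref{2^i} the prime $p$ fails to be Mersenne precisely when some digit $\beta_t$ (with $2^t \le p$) equals $0$.

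So I would suppose $p$ is not Mersenne and let $t$ be the \emph{smallest} index with $\beta_t = 0$. Since $p$ is odd we have $\beta_0 = 1$, hence $t \ge 1$, and by minimality $\beta_{t-1} = 1$. The key move is to test the hypothesis at $j = 2^{t-1}$. Then $3j = 2^t + 2^{t-1}$, so $\binom{p}{j}$ reads off the single digit in position $t-1$ (which is $1$), while $\binom{p}{3j}$ reads off the digits in positions $t$ and $t-1$ simultaneously; since $\beta_t = 0$, Lucas gives $\binom{p}{j} \equiv \beta_{t-1} \equiv 1$ but $\binom{p}{3j} \equiv \beta_t\,\beta_{t-1} \equiv 0 \pmod 2$. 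This contradicts the assumed congruence $\binom{p}{j} \equiv \binom{p}{3j \bmod p}$, and thereby forces $p$ to be Mersenne.

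The one step that needs care—and the main obstacle—is to ensure that no reduction occurs in $3j \bmod p$, i.e. that $3\cdot 2^{t-1} < p$, so the congruence really compares $\binom{p}{2^{t-1}}$ with $\binom{p}{2^t+2^{t-1}}$. Here I would invoke non-Mersenne-ness once more: since $\beta_t = 0$ but $p \neq 2^t - 1$, there must be a $1$-digit of $p$ above position $t$, whence $p \ge 2^{t+1}$. As $3\cdot 2^{t-1} = \tfrac{3}{2}\,2^t < 2^{t+1} \le p$, we get $3j \bmod p = 3j$ with $1 \le j \le p-1$, so the hypothesis applies verbatim (the case $t=1$ is just $j=1$, $3j=3<p$, using $p>3$).

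Everything else is routine digit bookkeeping; the genuinely load-bearing ingredients are Lucas' theorem to extract parities from binary digits, and the single specialization $j = 2^{t-1}$, whose image $3j = 2^t + 2^{t-1}$ switches on exactly the offending zero digit $\beta_t$. Lemma \ref{2facts} is not strictly needed for this route, though it furnishes an alternative way to treat the low-order cases by a forward induction on $m$ in $\binom{p}{2^m}$.
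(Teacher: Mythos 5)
Your proof is correct, and it takes a genuinely different route from the paper's. The paper argues forwards by induction: it shows $\binom{p}{2^s} \equiv 1 \pmod 2$ for every $s$ with $2^s \le p$ and then invokes Proposition \ref{2^i}. At each inductive step it uses the hypothesis in the ``pulling down'' direction --- writing $2^s+1 = 3k$ or $2^s+2 = 3r$ according to which of the two is divisible by $3$, so that $\binom{p}{2^s+1}$ or $\binom{p}{2^s+2}$ equals $\binom{p}{3k} \equiv \binom{p}{k}$ with $k < 2^s$ computable by Lucas --- and it then needs both parts of Lemma \ref{2facts} to transfer the parity from $\binom{p}{2^s+1}$ or $\binom{p}{2^s+2}$ back to $\binom{p}{2^s}$. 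You instead run a minimal-counterexample argument: locate the lowest zero binary digit $\beta_t$ of a putative non-Mersenne $p$ and test the hypothesis at the single value $j = 2^{t-1}$, pushing \emph{up} to $3j = 2^t + 2^{t-1}$, where Lucas' theorem makes the two sides visibly incongruent ($1$ versus $0$). This eliminates Lemma \ref{2facts} and the mod-$3$ case analysis entirely; the factor $3$ enters your argument only through the binary identity $3 \cdot 2^{t-1} = 2^t + 2^{t-1}$. The step you flag as needing care --- that no reduction modulo $p$ occurs --- is indeed the crux, and you handle it correctly: minimality of $t$ plus primality of $p$ rules out $p = 2^t - 1$ (which would be Mersenne), forcing a $1$-digit above position $t$, hence $p \ge 2^{t+1} > 3 \cdot 2^{t-1}$. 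Both proofs rest on Lucas' theorem; the paper's version has the merit of reusing lemmas it established for other purposes, while yours is shorter and essentially self-contained.
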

\begin{proof} By Proposition \ref{2^i}, it is enough to show that ${p\choose 2^i}\equiv1\mod 2$ for all $i$ such that $0 \le  2^{i} \le p $. We will prove this  using induction on $i$.
When $i = 0$,  ${p \choose 2^0} = {p\choose 1} =  p \equiv 1 \mod 2$ because $p$ is odd. When $i = 1$,  ${p \choose 2^1} \equiv {p\choose 2} \equiv {p \choose 3} \equiv {p \choose 1} \equiv p \equiv 1 \mod 2$.  (Here the second congruence follows from part 1 of Lemma \ref{2facts} and the third follows from the given hypothesis.)
For the induction step, we assume that ${p\choose 2^i}\equiv 1\mod 2$ for all $i<s$, and we will show that ${p\choose 2^s}\equiv 1\mod 2$.
Let $p=\sum_{i=0}^{n} a_i 2^i$ be the base $2$ expansions of  $p$. We first claim that all the $a_i$'s for $i < s$ have to be 1. This is so because for $i$ in this range, using the induction hypothesis, we have, $1 \equiv {p \choose 2^i} \equiv {a_i \choose 1} = a_i \mod 2$.
Since $2^s$ is not a multiple of 3, note that either  $2^s+1$ or $2^s+2$ has to be a multiple of 3.
\vskip 2mm
\noindent
\textbf{Case 1:}  Suppose that  $2^s+1$ is a multiple of 3.   Then we can write $2^s+1=3k$ for some $k$ where $k<2^s$. We let $k=\sum_{i=0}^{s-1}\alpha_i2^i$ be the base $2$ expansions of $k$.

Note that by part 1 of Lemma \ref{2facts}, it is enough to show that  ${p\choose 2^s+1} \equiv 1 \mod 2$.  Using the given hypothesis, applying Lucas theorem, and the fact that $a_i = 1$ for $i < s$, we get (working mod 2)
$${p\choose 2^s+1}={p\choose 3k}\equiv{p\choose k}={ \sum_{i=0}^{n} a_i 2^i \choose {\sum_{i=0}^{s-1}\a_i2^i}}=\prod_{i = 1}^{s-1} {a_i\choose \alpha_i}\equiv   \prod {1 \choose \alpha_i} \equiv \prod(1)= 1.$$ Hence, ${p\choose 2^s} \equiv 1 \mod 2$.

\vskip 2mm
\noindent
\textbf{Case 2:}  Suppose $2^s+2$ is a multiple of 3.  This case is similar to Case 1 except that we now use part 2 of Lemma \ref{2facts}.
Let  $2^s+2=3r$ for some $r$ where $r<2^s$, and let  $r=\sum_{i=0}^{s-1}\beta_i2^i$ be the base 2 expansion of $r$.  As before, working mod 2, we have
\[ {p\choose 2^s+2}={p\choose 3r}\equiv {p\choose r}={  \sum_{i=0}^{n} a_i 2^i \choose\sum_{i=0}^{s-1}\beta_i2^i} \equiv \prod_{i = 1}^{s-1} {a_i\choose \beta_i}\equiv   \prod {1 \choose \beta_i} \equiv \prod(1)=1. \]
Consequently, ${p\choose 2^s}\equiv 1\mod 2$.
\end{proof}

This completes the proof of the main theorem of this section.

\section{Circulant Matrices} \label{matrices}

An $n \times n$ square matrix $C$ over a field $k$ is circulant if each column of $C$ is obtained by rotating one element down relative to its preceding column.  Thus, a circulant matrix  is completely determined by specifying the first column ($\bf{v}$) because all the remaining column vectors are each cyclic permutations of $\bf{v}$ with offset equal to the column index. We denote this by $\Circ(\bf{v})$. The collection of all $n \times n$ circulant matrices over a field $k$ forms a ring, and we  denoted by $Cr_n(k)$.

It turns out that the results in this paper can be formulated in terms of circulant matrices. To see this connection, let $x$ be a generator of $C_n$, and  consider the natural map
\[ \rho \colon k C_n \lrar Cr_n(k)\]
defined by $\rho(\Sigma_{i=0}^{n-1} \alpha_i x^i) = \Circ(\alpha_0, \alpha_1, \alpha_2, \cdots, \alpha_{n-1})$.

\begin{prop}
The map $\rho$ establishes an isomorphism between the rings $kC_n$ and $Cr_n(k)$.
\end{prop}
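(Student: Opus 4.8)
The plan is to reduce the whole statement to the single basic shift matrix $P := \Circ(0,1,0,\dots,0)$, which satisfies $P\,e_j = e_{j+1}$ (indices read modulo $n$), where $e_0,\dots,e_{n-1}$ is the standard basis of $k^n$. First I would record the two facts that drive everything: $P^i = \Circ(0,\dots,1,\dots,0)$ with the single $1$ in position $i$, so that $\rho(x^i) = P^i$ on the group-element basis $\{1,x,\dots,x^{n-1}\}$ of $kC_n$; and $P^n = I$, which gives $P^iP^j = P^{(i+j)\bmod n}$. In other words, the matrices $P^0,\dots,P^{n-1}$ reproduce inside $Cr_n(k)$ exactly the multiplication table of $C_n$.

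Next I would check that $\rho$ is a $k$-algebra homomorphism. Additivity and $k$-linearity are immediate from the definition, since circulant matrices form a $k$-subspace of the full matrix ring and $\rho$ acts coordinatewise on the chosen basis. For multiplicativity it suffices, by $k$-bilinearity of both products, to verify the identity on basis elements: $\rho(x^i x^j) = \rho(x^{(i+j)\bmod n}) = P^{(i+j)\bmod n} = P^iP^j = \rho(x^i)\rho(x^j)$, where the middle equality is precisely the fact $P^n = I$ noted above. Since $\rho(1) = P^0 = I$, the map $\rho$ respects the identity as well.

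Finally I would establish bijectivity. The matrices $P^0,\dots,P^{n-1}$ have pairwise disjoint supports (each $P^i$ carries its $1$'s on a distinct cyclic diagonal), so they are $k$-linearly independent; and by definition every circulant matrix is $\Circ(\alpha_0,\dots,\alpha_{n-1}) = \sum_{i=0}^{n-1}\alpha_i P^i$. Hence $\{P^i\}$ is a $k$-basis of $Cr_n(k)$, so $\dim_k Cr_n(k) = n = \dim_k kC_n$, and $\rho$ carries the basis $\{x^i\}$ bijectively onto this basis. A $k$-linear map sending a basis to a basis is an isomorphism of vector spaces, and combined with the multiplicativity above, $\rho$ is an isomorphism of rings.

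The computation is routine throughout; the only point carrying genuine content is the reduction $P^n = I$, equivalently the closure assertion that a product of two circulant matrices is again circulant. I would make sure this is stated explicitly, since it is exactly what forces matrix multiplication in $Cr_n(k)$ to mirror the cyclic group law, and thereby makes $\rho$ multiplicative.
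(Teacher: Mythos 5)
Your proof is correct, and it is exactly the ``straightforward verification'' that the paper declines to write out (the paper's proof reads, in full, ``The proof is a straightforward verification, and we leave it to the reader''). Reducing everything to the shift matrix $P = \Circ(0,1,0,\dots,0)$, checking $\rho(x^i) = P^i$ and $P^n = I$, and then getting multiplicativity by bilinearity and bijectivity from the basis $\{P^0,\dots,P^{n-1}\}$ of $Cr_n(k)$, is the standard argument the authors surely had in mind.
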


The proof is a straightforward verification, and we leave it to the reader. Under this isomorphism, we have the following dictionary.

\begin{enumerate}
\item Units in $kC_n$ correspond to the invertible matrices.
\item Group elements correspond to circulant permutation  matrices.
\item  The identity element of $k C_n$ corresponds to the identity matrix.
\item The elements $1+x$, $1+x^3$, $1 + x + x^2$ in $\ftwo C_n$ correspond respectively to the circulant matrices $\Circ(1,1,0, 0, \cdots, 0)$, $\Circ(1,0,0,1, 0, \cdots 0)$, and $\Circ(1, 1, 1, 0, 0, \cdots, 0)$.
\item The norm element $1+x+x^2+ \cdots x^{n-1}$ in $k C_n$ corresponds to the $n \times n$ matrix $J$ which consists of all 1's.
\end{enumerate}

With this dictionary at hand, we can immediately translate our results to the world of circulant matrices.

\begin{thm} \label{8910}
Let $p > 3$ be a prime. Then the following are equivalent.
\begin{enumerate}
\item $p$ is a Mersenne prime.
\item[(8)] The group of invertible $p \times p$ circulant matrices over $\ftwo$ is an elementary abelian $p$-group.
\item[(9)]  $[\Circ(1, 1, 1, 0, 0, \cdots, 0)]^p = I_p \mod 2$.
\item[(10)] $[\Circ(1,1,0, 0, \cdots, 0)]^p = [\Circ(1,0,0,1, 0, \cdots 0)]^p \mod 2$.
\end{enumerate}
\end{thm}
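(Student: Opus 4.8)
The plan is to prove everything by transport of structure along the ring isomorphism $\rho \colon \ftwo C_p \to Cr_p(\ftwo)$ from the preceding proposition, reducing each of (8), (9), and (10) to a statement about $\ftwo C_p$ that has already been shown equivalent to $p$ being Mersenne in Theorem \ref{main}. Since $\rho$ is an isomorphism of rings, it sends $1$ to $I_p$, carries units bijectively onto invertible circulant matrices, and commutes with taking $p$th powers; combined with the dictionary preceding this theorem, it translates each matrix identity into the corresponding identity in the group algebra.

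First I would dispose of (9) and (10), which are immediate. By the dictionary, $\rho(1+x+x^2) = \Circ(1,1,1,0,\dots,0)$ and $\rho(1) = I_p$, so applying $\rho^{-1}$ shows that (9) holds if and only if $(1+x+x^2)^p = 1$ in $\ftwo C_p$, which is statement (5) of Theorem \ref{main}. Likewise $\rho(1+x) = \Circ(1,1,0,\dots,0)$ and $\rho(1+x^3) = \Circ(1,0,0,1,0,\dots,0)$, so (10) holds if and only if $(1+x)^p = (1+x^3)^p$ in $\ftwo C_p$, which is statement (6). Both (5) and (6) are equivalent to $p$ being a Mersenne prime by the results of Section \ref{1+x+x^2}, giving the equivalences of (9) and (10) with (1).

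For (8) I would restrict $\rho$ to the groups of units, obtaining a group isomorphism between $(\ftwo C_p)^\times$ and the group of invertible $p \times p$ circulant matrices over $\ftwo$. The only step that is not a purely mechanical translation is the identification of ``elementary abelian $p$-group'' with the order condition in statement (4) of Theorem \ref{main}: by Lemma \ref{numberofunits} the group $(\ftwo C_p)^\times$ is a product of cyclic groups and in particular abelian, so its image under $\rho$ is abelian as well, and an abelian group is elementary abelian of exponent $p$ exactly when every non-identity element has order $p$. Thus (8) holds if and only if every non-trivial unit of $\ftwo C_p$ has order $p$, which is statement (4) of Theorem \ref{main}, shown equivalent to (1) by the group-algebra analysis of Section \ref{section-kg}. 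Assembling the three equivalences of (8), (9), and (10) with (1) completes the proof; I expect no genuine obstacle beyond the bookkeeping of the isomorphism and this small observation about abelianness.
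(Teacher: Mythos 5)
Your proposal is correct and takes essentially the same route as the paper: both transport the three statements along the ring isomorphism $\rho \colon \ftwo C_p \to Cr_p(\ftwo)$ and its dictionary, identifying (8), (9), (10) with statements (4), (5), (6) of Theorem \ref{main}, whose equivalence with (1) was established in the earlier sections. Your explicit observation that the unit group is abelian---so that ``elementary abelian $p$-group'' in (8) reduces exactly to ``every non-trivial unit has order $p$'' in (4)---fills in a small detail that the paper leaves implicit in its dictionary.
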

\begin{proof}
We consider the isomorphism
\[ \rho \colon \ftwo C_p \lrar Cr_p(\ftwo)\]
defined above.  Under this isomorphism (which gives the above dictionary),  statements (8), (9), and (10) are equivalent respectively to statements (4), (5) and (6) of Theorem \ref{main}.  The latter statements were already shown to be equivalent to statement (1)  in Section \ref{units} and Section \ref{1+x+x^2}. So we are done.
\end{proof}

 We now translate the characterizations of $2$-rooted primes in the language of circulant matrices.

\begin{thm} \label{567} Let $p$ be an odd prime. Then the following are equivalent.
\begin{enumerate}
\item $p$ is $2$-rooted.
\item[(5)] There are $2^{p-1}-1$ invertible circulant matrices of size $p \times p$ over $\ftwo$.
\item[(6)] The only invertible circulant matrices over $\ftwo$ that have order dividing $p$ are the circulant permutation matrices.
\item[(7)] If $A$ is a  $p \times p$ circulant matrix over $\ftwo$ which is not $J$ (matrix with all 1's) and the vector of all 1's is not in the null space of $A$, then $A$ is invertible over $\ftwo$.
\end{enumerate}
\end{thm}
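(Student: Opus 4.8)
The plan is to mimic the proof of Theorem \ref{8910} by transporting everything through the ring isomorphism $\rho \colon \ftwo C_p \to Cr_p(\ftwo)$ and its accompanying dictionary, so that statements (5), (6), and (7) become, respectively, statements (2), (3), and (4) of Theorem \ref{main2}. Those three statements were already shown to be equivalent to $p$ being $2$-rooted in Section \ref{primitiveroots} (via Corollary \ref{cor:boundforunits}, Proposition \ref{nontrivialunits}, and Corollary \ref{augmentation}), so once the translations are in place the theorem follows at once.

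First I would dispatch (5). Since $\rho$ is a ring isomorphism, it restricts to a bijection between the units of $\ftwo C_p$ and the invertible elements of $Cr_p(\ftwo)$ (dictionary item 1). Hence the count $|(\ftwo C_p)^\times| = 2^{p-1}-1$ of statement (2) is identical to the count of invertible $p \times p$ circulant matrices in statement (5).

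Next I would handle (6). Under $\rho$ the group elements of $C_p$ correspond exactly to the circulant permutation matrices (dictionary item 2), with the identity element matching the identity matrix (item 3); in particular every such matrix has order dividing $p$. The non-identity elements of $C_p$ are precisely the units of order $p$, while the non-identity circulant permutation matrices are precisely the invertible circulant matrices of order dividing $p$ other than the identity. Adding the identity back to each side shows that statement (3) and statement (6) describe the same condition.

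The step that requires genuine verification is (7), and this is where I expect the only real (if mild) obstacle to lie. Here I must match the hypotheses ``$\theta \ne \eta$ and $\epsilon(\theta) \ne 0$'' of statement (4) with the hypotheses ``$A \ne J$ and $\mathbf 1 \notin \ker A$'' of statement (7). The norm element $\eta$ corresponds to $J$ (dictionary item 5), so the first conditions agree. For the augmentation condition I would observe that if $A = \Circ(\alpha_0, \dots, \alpha_{p-1}) = \rho(\theta)$, then every row of $A$ lists the same coefficients, whence $A\mathbf 1 = \left(\sum_i \alpha_i\right)\mathbf 1 = \epsilon(\theta)\,\mathbf 1$ in $\ftwo^{\,p}$. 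Consequently $\mathbf 1 \in \ker A$ if and only if $\epsilon(\theta) = 0$, i.e.\ $\theta \in \ker \epsilon$. With this identification, statement (7) is exactly statement (4) transported along $\rho$ (using once more that $\theta$ is a unit if and only if $A$ is invertible), and the proof is complete.
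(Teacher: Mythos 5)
Your proposal is correct and follows essentially the same route as the paper: transport (5), (6), (7) through the isomorphism $\rho$ and its dictionary to statements (2), (3), (4) of Theorem \ref{main2}, with the only substantive check being that $\mathbf{1} \in \ker A$ corresponds exactly to $\epsilon(\theta) = 0$ — which is precisely the step the paper also singles out and verifies. Your treatment of the order-dividing-$p$ versus order-exactly-$p$ bookkeeping in (6) is a minor extra detail the paper leaves implicit, but it is handled correctly.
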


\begin{proof}
Once again we use the aforementioned dictionary given by  the isomorphism
\[ \rho \colon \ftwo C_p \lrar Cr_p(\ftwo).\]
Under this isomorphism, statements (5), (6) and (7) are equivalent respectively to statements (2), (3) and (4) of Theorem \ref{main2}.  To see the equivalence of (4) and (7), observe that an element $\sum_{i=0}^{p-1} a_i x^i$ in $\ftwo C_p$ will not be in the kernel of the augmentation map exactly when  $\sum a_i$ is equal to $1$.
This is equivalent to saying that the $p \times 1$ vector of all $1$'s is not in the null space of the circulant matrix $\Circ(a_0, a_1, \cdots, a_{p-1})$.
Statements (2), (3) and (4) of Theorem \ref{main2} were already shown to be equivalent to (1) in Section \ref{primitiveroots}. So we are done.
\end{proof}

\section{Bipartite Graphs} \label{graphs}

In this section we will make graph theoretic translations of our results using simple definitions and ideas from graph theory. We use the standard terminology of graphs which can be found in any textbook on graph theory; see \cite{Bollobas} for instance.

Given any $n \times n$ binary matrix (one in which every entry is either a $0$ or a $1$) $M = (m_{ij})$, we can associate to it an $(n , n)$ bipartite graph as follows. Take two sets $A := \{ a_1, a_2, \cdots, a_n\}$ and $B := \{ b_1, b_2, \cdots, b_n\}$. Vertex $a_i$ is adjacent to $b_j$ if and only if $m_{ij} = 1$.  This association clearly establishes a 1-1 correspondence between $n \times n$ binary matrices and the collection of bipartite graphs on sets $A$ and $B$.  The matrix corresponding to a graph in this bijection is often called the biadjacency matrix of the graph.
A bipartite graph $G$ is called  circulant if its biadjacency matrix is a circulant matrix.

Some algebraic invariants and operations in the world of matrices can be interpreted graph theoretically.  Here we will discuss  the graph theoretic  interpretation of the determinant and matrix multiplication. To do this we need a few definitions.

The permanent of an $n \times n$ square matrix $T =(t_{ij})$ is defined as
\[ \text{perm}(T) = \sum_{\pi \in S_n} t_{1\pi(1)}t_{2\pi(2)}\cdots t_{n \pi(n)},\]
where $S_n$ is the set of all permutations of  the set $\{ 1, 2, 3, \cdots , n\}$.
This looks almost like the definition of the determinant. The only difference is that we do not have the extra $\text{sgn}(\pi)$ in front of each term in the above sum.  Therefore, note that when working modulo $2$, the two notions are the same. That is,
\[ \det(T) \equiv \text{perm}(T)  \mod 2.\]

A matching in a graph $G$ is a set of edges $F \subseteq E(G)$ such that no vertex of $G$ is incident to more than one edge of $F$. A perfect matching is a matching that will cover all the vertices of $G$. Matching theory is a rich branch of graph theory and we refer the reader to the excellent book \cite{lovasz} for a wealth of useful information on matchings.

When $M$ is the biadjacency matrix of an $(n, n)$-bipartite graph as explained above, then $\text{perm}(M)$ is equal to the number of perfect matchings in $G$. (This is easy to see. Note that every perfect matching corresponds to a permutation $\pi$ in $S_n$ such that $m_{i\pi(i)} = 1$ for all $i$.
Therefore  in the formula for the permanent,
\[ \text{perm}(M) = \sum_{\pi \in S_n} m_{1\pi(1)}m_{2\pi(2)}\cdots m_{n \pi(n)},\]
a term will be equal to 1 precisely when $\pi$ corresponds to a perfect matching, and will be $0$ otherwise.) This gives:

\begin{lemma}
Let $G$ be an $(n, n)$ bipartite graph. $G$ has an odd number of perfect matchings if and only if the biadjacency matrix $M$ of $G$ is invertible mod $2$.
\end{lemma}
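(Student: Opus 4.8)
The statement to prove is the final lemma: for an $(n,n)$ bipartite graph $G$ with biadjacency matrix $M$, the graph $G$ has an odd number of perfect matchings if and only if $M$ is invertible over $\ftwo$. The plan is to combine two ingredients that the excerpt has already supplied. First, the discussion immediately preceding the lemma establishes that $\text{perm}(M)$ counts the perfect matchings of $G$: each perfect matching corresponds to a permutation $\pi \in S_n$ with $m_{i\pi(i)} = 1$ for all $i$, and the corresponding product term in the permanent equals $1$ exactly for such $\pi$ and $0$ otherwise. Second, the excerpt records the key reduction $\det(M) \equiv \text{perm}(M) \pmod 2$, which holds because the sign factor $\text{sgn}(\pi)$ is $\pm 1$ and hence $\equiv 1 \pmod 2$.

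**Executing the argument.**

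With these two facts in hand, the proof is essentially immediate. First I would note that the number of perfect matchings of $G$ equals $\text{perm}(M)$, so $G$ has an odd number of perfect matchings if and only if $\text{perm}(M) \equiv 1 \pmod 2$. Next, using $\det(M) \equiv \text{perm}(M) \pmod 2$, this is equivalent to $\det(M) \equiv 1 \pmod 2$, i.e. $\det(M) \neq 0$ in $\ftwo$. Finally, since $\ftwo$ is a field, a square matrix over $\ftwo$ is invertible precisely when its determinant is nonzero; so $\det(M) \neq 0$ in $\ftwo$ is equivalent to $M$ being invertible over $\ftwo$. Chaining these equivalences together yields the lemma.

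**The main obstacle.**

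Honestly, there is very little obstacle here, since both nontrivial inputs are already proved or stated in the excerpt. The only point requiring minor care is the reduction $\det \equiv \text{perm} \pmod 2$: one must be sure that viewing $M$ as a matrix over $\ftwo$ (rather than computing $\text{perm}(M)$ as an integer and then reducing) is legitimate. This is fine because reduction modulo $2$ is a ring homomorphism $\ints \to \ftwo$, so it commutes with the polynomial expressions defining both $\det$ and $\text{perm}$; equivalently, one computes the determinant directly in $\ftwo$, where each $\text{sgn}(\pi)$ is simply $1$. The cleanest exposition is to phrase everything over $\ftwo$ from the start, so that the determinant and permanent are literally the same sum, and then invoke the standard fact that nonzero determinant over a field is equivalent to invertibility.
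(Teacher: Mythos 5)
Your proof is correct and follows exactly the route the paper takes: the preceding discussion in the paper (permanent counts perfect matchings, and $\det \equiv \mathrm{perm} \bmod 2$ since the signs vanish modulo $2$) is precisely the intended justification of the lemma, combined with the standard fact that a square matrix over the field $\ftwo$ is invertible if and only if its determinant is nonzero. Your extra remark about reduction modulo $2$ being a ring homomorphism is a careful touch the paper leaves implicit, but it is the same argument.
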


To explain matrix multiplication for biadjacency matrices  graph theoretically, we need one more definition.
A pseudopath of length $r$ in an $(n, n)$  bipartite graph $G$ is an ordered sequence $\{e_1, e_2, \cdots, e_r \}$ of $r$ edges in $G$  such that for all $i$ from 1 to $r-1$, the tail of $e_i$ in $B$ and the head of $e_{i+1}$  in $A$ have the same subscript.
One can now easily verify that the $(i, j)$th entry in $M^r$ counts the number $s_{ij}(r)$ of pseudo paths of length $r$ in $G$ between $a_i$ and $b_j$.

Let $\G_p$ be the collection of all $(p,p)$ labeled bipartite circulant graphs. It is easy to see that these graphs are regular (i.e, all vertices have the same degree).
We now have a natural bijection from $\G_p$ to the ring of $p \times p$ circulant matrices over $\ftwo$.
\[ \eta  \colon \G_p \lrar Cr_p(\ftwo),\]
which assigns to each graph in $\G_p$ its biadjacency matrix.  Note that this is a well-defined map, which endows $\G_p$ the structure of an unital associative ring.  This isomorphism of rings gives the following dictionary in view of the above discussion.

\begin{enumerate}
\item Graphs which have an odd number of perfect matchings correspond to invertible matrices.
\item The identity matrix corresponds to the graph of the trivial perfect matching in which $a_i$ is adjacent to  $b_i$ for all $i$.
\item Non-identity matrices of order $p$ correspond to graphs which have the property that  $s_{ij}(p)$ (the number of pseudopaths between vertex $a_i$ and vertex $b_j$) is equal to $\delta_{ij}$, the Kronecker delta symbol.
\item The matrix $J$ corresponds to the complete bipartite graph.
\item The degree of the graph will be the sum of the entries in any row or column of the biadjacency matrix.
\item Graphs of degree 1 correspond to circulant permutation matrices.
\end{enumerate}

We are now ready to translate our algebraic results into the world of circulant bipartite graphs.

\begin{prop}
There are exactly $(2^{\ord_p(2)} - 1)^{(p-1)/\ord_p(2)}$,  labeled $(p, p)$  bipartite circulant graphs which have an odd number of perfect matchings. Moreover, this collection is naturally equipped with a structure of an abelian group.
\end{prop}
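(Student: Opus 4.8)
The plan is to transport the entire statement across the ring isomorphism $\eta \colon \G_p \lrar Cr_p(\ftwo)$ established just above, combined with the chain of isomorphisms $Cr_p(\ftwo) \cong \ftwo C_p \cong \ftwo[x]/(x^p-1)$ from Section \ref{matrices}. Under these identifications, the bipartite circulant graphs that have an odd number of perfect matchings correspond exactly to the invertible elements, since by the lemma preceding the dictionary a graph $G$ has an odd number of perfect matchings if and only if its biadjacency matrix $M$ is invertible mod $2$, i.e.\ $\det(M) \equiv \text{perm}(M) \mod 2$ is nonzero. Thus the cardinality in question is precisely $|(\ftwo C_p)^\times|$.

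The first step, then, is to invoke Lemma \ref{numberofunits}, which gives
\[ | (\ftwo C_p)^\times | = (2^{\ord_p (2)} -1)^{\frac{(p-1)}{\ord_p (2)}}. \]
This is exactly the claimed count of graphs with an odd number of perfect matchings, so the cardinality assertion is immediate once the dictionary is in place. The second step is to address the abelian group structure. Here I would note that $\eta$ is a ring isomorphism, so the invertible elements of $Cr_p(\ftwo)$ form a group under multiplication, and this group is carried back to the collection of graphs with an odd number of perfect matchings via $\eta^{-1}$. The group operation on graphs is induced from matrix multiplication on biadjacency matrices. Finally, abelianness follows because $Cr_p(\ftwo) \cong \ftwo C_p$ and $C_p$ is abelian, so the ring $\ftwo C_p$ is commutative and hence its unit group is abelian; transporting through $\eta^{-1}$ shows the group of graphs is abelian as well.

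The only genuinely substantive point to verify is that the bijection $\eta$ really does restrict to a bijection between units and graphs with an odd number of perfect matchings, and this is already supplied by the lemma relating permanents, determinants mod $2$, and invertibility, together with item (1) of the dictionary. The remaining obstacle, if any, is purely expository: one must be careful that the induced binary operation on graphs is well-defined and associative, but this is automatic because it is literally conjugated from the ring multiplication in $Cr_p(\ftwo)$ by the bijection $\eta$. No separate calculation is needed. I expect the entire argument to be a short paragraph invoking Lemma \ref{numberofunits} and the dictionary, with the commutativity of $C_p$ supplying the abelian claim.
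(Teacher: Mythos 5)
Your proposal is correct and matches the paper's own proof: both identify the graphs with an odd number of perfect matchings with the unit group $(\ftwo C_p)^\times$ via the isomorphisms $\eta$ and $\rho$, then invoke Lemma \ref{numberofunits} for the count and the commutativity of $\ftwo C_p$ for the abelian group structure. The paper states this more tersely, but there is no substantive difference in the argument.
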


\begin{proof}Under the  isomorphisms $\eta  \colon \G_p \lrar Cr_p(\ftwo)$ and $\rho \colon \ftwo C_p \lrar Cr_p(\ftwo)$,
the collection in question is exactly equal to $(\ftwo C_p)^\times$. This is the abelian group of units in $\ftwo C_p$ and its structure and order was computed in Section \ref{units}.
\end{proof}

\begin{prop}
Let $p > 3$ be a prime. The $(p, p)$ bipartite graph corresponding to the $p \times p$ Circulant matrix $\Circ(1, 1,1, 0 \cdots , 0)$
will have an odd number of perfect matchings.
\end{prop}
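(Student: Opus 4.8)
The statement to prove is that the $(p,p)$ bipartite graph whose biadjacency matrix is $\Circ(1,1,1,0,\ldots,0)$ has an odd number of perfect matchings, for any prime $p > 3$. The plan is to translate this assertion through the dictionaries already established in this section and in Section \ref{matrices}, reducing it to a fact about units in $\ftwo C_p$ that was essentially proved earlier. By item (1) of the dictionary following the map $\eta$, a labeled bipartite circulant graph has an odd number of perfect matchings if and only if its biadjacency matrix is invertible over $\ftwo$. So it suffices to show that $\Circ(1,1,1,0,\ldots,0)$ is invertible in $Cr_p(\ftwo)$.

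The main step is to invoke the ring isomorphism $\rho \colon \ftwo C_p \to Cr_p(\ftwo)$ of Section \ref{matrices}, under which, by item (4) of the dictionary there, the matrix $\Circ(1,1,1,0,\ldots,0)$ corresponds precisely to the element $1 + x + x^2$ in $\ftwo C_p$. Since $\rho$ is a ring isomorphism, $\Circ(1,1,1,0,\ldots,0)$ is invertible in $Cr_p(\ftwo)$ if and only if $1 + x + x^2$ is a unit in $\ftwo C_p$. Thus the entire claim collapses to the unit statement.

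Finally, I would cite the argument already given in Section \ref{1+x+x^2} immediately following the statement of Theorem \ref{15}: when $p > 3$ we have $\ord_p(2) > 2$, so by Lemma \ref{cyclotomic} every irreducible factor of $\Phi_p(x)$ has degree $\ord_p(2) > 2$. Since $1 + x + x^2$ is irreducible of degree $2$ in $\ftwo[x]$, it shares no common factor with $\Phi_p(x)$, and hence it is relatively prime to $x^p - 1 = (x-1)\Phi_p(x)$ (it is clearly coprime to $x-1$ as well, since $1+1+1 = 1 \neq 0$ in $\ftwo$). Therefore $1 + x + x^2$ is a unit in $\ftwo[x]/(x^p-1) \cong \ftwo C_p$. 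Chaining these equivalences back up gives the desired conclusion.

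I do not expect any genuine obstacle here, since every ingredient is already in place; the only thing to be careful about is to invoke the invertibility of $1 + x + x^2$ (which requires only $p > 3$) rather than its order being $p$ (which is the Mersenne condition and is irrelevant to this proposition). The proof is essentially a bookkeeping exercise of passing through the two isomorphisms $\eta$ and $\rho$ and landing on the coprimality computation in $\ftwo[x]$.
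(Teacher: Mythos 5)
Your proposal is correct and follows essentially the same route as the paper: reduce via the perfect-matchings/invertibility dictionary and the isomorphism $\rho$ to the claim that $1+x+x^2$ is a unit in $\ftwo C_p$, then cite the coprimality argument from Section \ref{1+x+x^2} (using $\ord_p(2) > 2$ and Lemma \ref{cyclotomic}). Your explicit check of coprimality with $x-1$ is a small extra detail the paper leaves implicit, but the substance is identical.
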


\begin{proof}
It is enough to show that the $p \times p$ matrix  $\Circ(1, 1,1, 0 \cdots , 0)$  is invertible over $\ftwo$ whenever $p > 3$. This is equivalent to showing that the element $1 + x +x^2$ is a unit in $\ftwo C_p$, where $x$ is a generator of $C_p$. We proved this in  Section \ref{1+x+x^2}.
\end{proof}

\begin{thm} \label{1112}Let $p > 3$ be a prime. Then the following are equivalent.
\begin{enumerate}
\item $p$ is a Mersenne
\item[(11)] Every circulant  $(p ,p)$ bipartite graph with odd number of perfect matchings has
$s_{ij}(p) \mod 2 = \delta_{ij}$, where $s_{ij}(p)$ is the number of pseudopaths between vertex $a_i$ and vertex $b_j$, and $\delta_{ij}$ is the Kronecker delta symbol.
\item[(12)] The $(p,p)$ bipartite graph corresponding to the $p \times p$ circulant matrix  \break $\Circ(1, 1, 1, 0, 0, \cdots, 0)$ has
$s_{ij}(p) \mod 2 = \delta_{ij}$.
\end{enumerate}
\end{thm}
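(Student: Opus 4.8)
The plan is to reduce both equivalences to the matrix-theoretic results of Theorem \ref{8910} via the ring isomorphisms $\eta \colon \G_p \to Cr_p(\ftwo)$ and $\rho \colon \ftwo C_p \to Cr_p(\ftwo)$ together with the accompanying dictionary. The single observation that makes everything go through is that, for a graph $G$ with biadjacency matrix $M$, the $(i,j)$-entry of $M^p$ equals the pseudopath count $s_{ij}(p)$; hence the graph-theoretic condition $s_{ij}(p) \mod 2 = \delta_{ij}$ for all $i,j$ is nothing but the matrix identity $M^p = I_p$ over $\ftwo$, since $I_p$ is the matrix with $(i,j)$-entry $\delta_{ij}$.

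First I would dispose of the equivalence $(1) \Lrar (11)$. By the first item of the dictionary, a graph in $\G_p$ has an odd number of perfect matchings exactly when its biadjacency matrix is invertible. Thus statement (11) asserts that every invertible $p \times p$ circulant matrix $M$ over $\ftwo$ satisfies $M^p = I_p$; equivalently, every such $M$ has order dividing $p$. Since the group of invertible circulant matrices is abelian and $p$ is prime, this is precisely the assertion that $Cr_p(\ftwo)^\times$ is an elementary abelian $p$-group, which is statement (8). By Theorem \ref{8910} we already know $(8) \Lrar (1)$, so $(11) \Lrar (1)$ follows.

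Next I would handle $(1) \Lrar (12)$. Under the circulant-matrix dictionary, the element $1+x+x^2 \in \ftwo C_p$ corresponds to $\Circ(1,1,1,0,\ldots,0)$, which in turn is the biadjacency matrix of the graph named in statement (12). Applying the pseudopath identity above to this particular matrix $M = \Circ(1,1,1,0,\ldots,0)$, condition (12) says exactly that $M^p = I_p$, which is statement (9). Again by Theorem \ref{8910} we have $(9) \Lrar (1)$, completing the chain and showing all three statements are equivalent.

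The argument is essentially a bookkeeping translation, so there is no deep obstacle; the one point requiring care is the pseudopath-to-matrix-power identification, namely verifying that the $(i,j)$-entry of $M^p$ genuinely counts the pseudopaths of length $p$ between $a_i$ and $b_j$ (established earlier when pseudopaths were defined) and that comparing this entrywise modulo $2$ against $\delta_{ij}$ is literally the equation $M^p = I_p$ in $Cr_p(\ftwo)$. Once this is in hand, both equivalences follow immediately from the already-proven matrix characterizations, and no further computation is needed.
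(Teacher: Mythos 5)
Your proof is correct and follows essentially the same route as the paper: both use the dictionary induced by $\eta$ (perfect-matching parity $\leftrightarrow$ invertibility, pseudopath counts $\leftrightarrow$ entries of $M^p$) to identify statements (11) and (12) with the matrix statements (8) and (9), and then invoke Theorem \ref{8910}. Your write-up just makes explicit the dictionary verifications that the paper leaves implicit.
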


\begin{proof} Using the dictionary given by the map $\eta$, we see that statements (11) and (12) are equivalent respectively  to statements (8) and (9) of Theorem \ref{main}. The latter were shown to be equivalent to (1) in Section \ref{matrices}.
\end{proof}

\begin{thm} \label{8910m2} Let $p$ be an odd prime. Then the following are equivalent.
\begin{enumerate}
\item $p$ is $2$-rooted.
\item[(8)] There are $2^{p-1}-1$  circulant $(p,p)$ bipartite graphs on labeled vertices with odd number of perfect matchings.
\item[(9)] If $G$ is a circulant $(p,p)$ bipartite graph on labeled vertices with an odd degree and is not a complete bipartite graph, then $G$ has an odd number of perfect matchings.
\item[(10)] If $G$ is a circulant $(p,p)$ bipartite graph on labeled vertices with an odd number of perfect matchings and $s_{ij}(p) \mod 2  = \delta_{ij}$ for all $1\le i , j \le p$, then the degree of $G$ is 1.
\end{enumerate}
\end{thm}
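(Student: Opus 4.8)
The plan is to proceed exactly as in the proof of Theorem \ref{1112}: I would invoke the ring isomorphism $\eta \colon \G_p \lrar Cr_p(\ftwo)$ together with the dictionary established earlier in this section, translate each of the graph-theoretic statements (8), (9), and (10) into an equivalent statement about circulant matrices over $\ftwo$, and then appeal to Theorem \ref{567}, where the corresponding matrix statements were already shown to be equivalent to $p$ being $2$-rooted. Concretely, I expect (8), (9), and (10) to correspond respectively to statements (5), (7), and (6) of Theorem \ref{567}.

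The translations of (8) and (10) are the most direct. For (8), the first dictionary item identifies graphs with an odd number of perfect matchings with invertible biadjacency matrices, so counting such graphs is the same as counting invertible circulant matrices; thus (8) is statement (5) of Theorem \ref{567}. For (10), the hypothesis $s_{ij}(p) \bmod 2 = \delta_{ij}$ says precisely that the $(i,j)$ entry of $A^p$ equals $\delta_{ij}$ modulo $2$, using that $s_{ij}(p)$ is the number of pseudopaths of length $p$ and hence the $(i,j)$ entry of $A^p$. Together with invertibility this is the condition that $A$ is invertible of order dividing $p$, and by the sixth dictionary item ``degree $1$'' means $A$ is a circulant permutation matrix. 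Hence (10) is exactly statement (6) of Theorem \ref{567}.

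The translation of (9) is where the one genuine point of care arises, and I expect it to be the only nontrivial step. The condition ``$G$ is not a complete bipartite graph'' is, by the fourth dictionary item, the condition $A \ne J$, and ``$G$ has an odd number of perfect matchings'' is again invertibility of $A$. The remaining hypothesis, ``$G$ has odd degree,'' must be matched with the hypothesis ``the vector of all $1$'s is not in the null space of $A$'' appearing in statement (7) of Theorem \ref{567}. The key observation is that for a circulant binary matrix $A = \Circ(a_0, \dots, a_{p-1})$, every entry of $A\mathbf{1}$ equals the common row sum $\sum_i a_i$ reduced modulo $2$, which by the fifth dictionary item is exactly the degree of $G$ modulo $2$. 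Therefore $A\mathbf{1} \ne 0$, i.e. the vector of all $1$'s is not in the null space, if and only if the degree of $G$ is odd. With this identification, (9) becomes statement (7) of Theorem \ref{567}.

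Finally, since Theorem \ref{567} establishes that its statements (5), (6), and (7) are each equivalent to $p$ being $2$-rooted, the three translations above yield the desired equivalences. I anticipate no substantial obstacle: the only computation requiring attention is the row-sum identity relating ``odd degree'' to the null-space condition, and this is a one-line verification special to circulant matrices over $\ftwo$.
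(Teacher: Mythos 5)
Your proposal is correct and takes essentially the same approach as the paper: the paper's proof likewise applies the dictionary coming from the isomorphism $\eta \colon \G_p \lrar Cr_p(\ftwo)$ to identify statements (8), (9), and (10) with statements (5), (7), and (6) of Theorem \ref{567}, and then cites the equivalences already established there. Your explicit verification that ``odd degree'' corresponds to the all-ones vector not lying in the null space of the biadjacency matrix is exactly the translation the paper relies on (it appears in the proof of Theorem \ref{567}), so there is no gap.
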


\begin{proof} Using the  dictionary given by the map $\eta$ we see that statements (8), (9) and (10) are equivalent respectively to statements (5), (7) and (6) of Theorem \ref{main2}. The latter were shown to be equivalent to (1) in Section \ref{matrices}.
\end{proof}

\section{Proofs of Theorems \ref{main} and \ref{main2}}

In this section we will explain how to tie up the various results in this paper to complete the proofs of Theorem \ref{main} and Theorem \ref{main2}.
\begin{proof}[Proof of Theorem \ref{main}:] It was observed at the end of \S 2 that statement (3) is equivalent to statement (2). The following diagram shows how to justify the remaining equivalences in Theorem \ref{main}.
\begin{align*}
(1) \overset{\ref{sec-p-kg}}{\iff}& (2)                                                                                         \\
(1) \overset{\ref{sec-p-kg}}{\iff}     & (4)  \overset{\ref{8910}}{\iff}  (8)  \overset{\ref{1112}}{\iff}   (11)   \\
                                               & (5) \overset{\ref{8910}}{\iff}  (9)  \overset{\ref{1112}}{\iff}    (12)    \\
                                               & (6)  \overset{\ref{8910}}{\iff} (10)                                           \\
(1) \overset{\ref{15}}{\implies}           & (5) \overset{\ref{56}}{\implies}    (6) \overset{\ref{56}}{\implies}       (7) \overset{\ref{71}}{\implies}  (1)
\end{align*}

\end{proof}

\begin{proof}[Proof of Theorem \ref{main2}:] The following diagram shows how the 10 statements of Theorem \ref{main2} are equivalent.
\begin{eqnarray*}
(1) \overset{\ref{cor:boundforunits}}{\iff} & (2) \overset{\ref{567}}{\iff}  &  (5) \overset{\ref{8910m2}}{\iff} (8) \\
(1) \overset{\ref{nontrivialunits}}{\iff} & (3) \overset{ \ref{567}}{\iff}  &  (6) \overset{\ref{8910m2}}{\iff} (10) \\
(1) \overset{ \ref{augmentation}}{\iff} & (4) \overset{ \ref{567}}{\iff}  &  (7) \overset{ \ref{8910m2}}{\iff} (9)
\end{eqnarray*}

It remains to show that each of these statements  imply that $p \equiv 3 \text{ or } 5 \mod 8$. Since these statements are all equivalent, it is enough to show that statement (1) implies this condition on $p$.  To this end, let $p$ be a $2$-rooted prime. It is easy to see that $2$ is a quadratic non-residue mod $p$;  that is, $2$ is not a square mod $p$. (For, if $2 \equiv u^{2} \mod p$, then $2^{\frac{p-1}{2}} = 1$ by Fermat's little theorem, contradicting the fact that $p$ is $2$-rooted.) Using the Legendre symbol, this can be expressed as
\[ \left(\frac{2}{p}\right) = -1.\]
From the quadratic reciprocity law, we know that this equation holds precisely when $p \equiv 3 \text{ or } 5 \mod 8$; see \cite{Burton}.
\end{proof}

\vskip 3mm
\section*{Acknowledgements} The first author presented this research in the number theory seminar at UIUC and in the Discrete Mathematics seminar at Illinois State University.  We would like to thank the valuable feedback we received from these groups. In particular, we are thankful to Papa Sissokho for pointing out to us the connection between the permanent of a matrix and perfect matchings of a graph. We would also like to thank Keir  Pieter Moore, Jan Minac,  B. Sury, and Shailesh Tipnis  for  their interest, sharing their thoughts, and giving us valuable references related to this work. This research was inspired by a graduate course in number theory  taught by the first author in the fall of 2013. We would like to thank our graduate student Christina Henry for  her help in improving the exposition.


\begin{thebibliography}{DummitFoote}

\bibitem{transitive}
Aulicino, D. J.; Goldfeld, M;
A new relation between primitive roots and permutations. 
Amer. Math. Monthly 76 1969 664-666. 

\bibitem{Bollobas}
 Bollobas, B.; Modern graph theory. Graduate Texts in Mathematics, 184. Springer-Verlag, New York, 1998. xiv+394 pp. ISBN: 0-387-98488-7

\bibitem{Burton}
Burton, D..; Elementary number theory. Second edition. W. C. Brown Publishers, Dubuque, IA, 1989. xviii+450 pp. ISBN: 0-697-05919-7

\bibitem{24}
Chebolu, S. K.; What is special about the divisors of 24?  \emph{Math. Mag.},  Vol. 85, No. 5 (2012).

\bibitem{12}
Chebolu, S. K.;  Mayers, Michael. What is special about the divisors of 12?, \emph{Math. Mag.}  Vol. 86, No. 2 (2013).

\bibitem{David}
Dolan, D.;
Group of units in a finite ring.
J. Pure Appl. Algebra 170 (2002), no. 2-3, 175-183.


\bibitem{DummitFoote}
 Dummit, D. S.; Foote, R.  M. Abstract algebra. Third edition. John Wiley \& Sons, Inc., Hoboken, NJ, 2004. xii+932 pp.

\bibitem{GL}
Genzlinger, Karenna; Lockridge, Keir. Sophie Germain primes and involutions of $\mathbb{Z}_n$. Involve, to appear.

\bibitem{Higman}
Higman, G.; The units of group-rings.
Proc. London Math. Soc. (2) 46, (1940). 231-248.


\bibitem{Hooley}
Hooley, C. On Artin's conjecture. J. Reine Angew. Math. 225: (1967) 209-220.

\bibitem{Kosher}
Kosher, T.; Elementary Number Theory with applications, Academic Press, ISBN-10: 0123724872.

\bibitem{Gupta-Murthy}
 Gupta, R.; Murty, M. R.;  A remark on Artin's conjecture. Invent. Math. 78 (1): (1984) 127-130.


\bibitem{lovasz}
Lovasz, L.; Plummer, M. D.; Matching theory. North-Holland Mathematics Studies, 121. Annals of Discrete Mathematics, 29. North-Holland Publishing Co., Amsterdam;  (Publishing House of the Hungarian Academy of Sciences), Budapest, 1986. xxvii+544 pp.


\end{thebibliography}
\end{document}